\documentclass[12pt]{article}

\usepackage{amsmath,amssymb,amsthm,amscd,dsfont}
\usepackage{amsfonts,latexsym,rawfonts}
\usepackage{lscape}
\usepackage{amscd, float,times,rotating}
\usepackage{pb-diagram}
\usepackage{hyperref}
\numberwithin{equation}{section}

\newtheorem{theorem}{Theorem}[section]
\newtheorem{lemma}[theorem]{Lemma}

\newtheorem{corollary}[theorem]{Corollary}
\theoremstyle{definition}

\newtheorem{example}[theorem]{Example}
\theoremstyle{remark}
\newtheorem{remark}[theorem]{Remark}

\newcommand{\R}{\mathbb{R}}
\newcommand{\Sph}{\mathbb{S}}
\renewcommand{\Cap}{\operatorname{Cap}}
\newcommand{\Sec}{\operatorname{Sec}}

\newcommand{\tr}{\operatorname{tr}}
\newcommand{\Id}{\operatorname{Id}}

\title{Curvature-Based Bounds for $p$-Capacity and Capacity--Area Inequalities}
\author{Xiaoshang Jin\\[4pt]
\small School of Mathematics and Statistics, Huazhong University of Science and Technology,\\
\small Wuhan 430074, P.R. China\\
\small \texttt{jinxs@hust.edu.cn}}
\date{}

\begin{document}

\maketitle

\begin{abstract}
We establish curvature-based bounds for the $p$-capacity of convex domains and related capacity--area inequalities. Let $K$ denote the Gauss--Kronecker curvature of $\partial\Omega$. In Euclidean space, if
$
0<K_-\leq K\leq K_+,
$
then, for every $1<p\leq \frac{n+1}{2}$,
$$
\operatorname{Cap}_p\left(B(K_+^{\frac{1}{1-n}})\right)
\leq
\operatorname{Cap}_p(\Omega)
\leq
\operatorname{Cap}_p\left(B(K_-^{\frac{1}{1-n}})\right).
$$
The lower comparison remains valid for the full range $1<p<n$. Moreover, the exponent range in the upper comparison is optimal in general: when $n\geq 4$ and $\frac{n+1}{2}<p<n$, the upper comparison may fail. Thus $p=\frac{n+1}{2}$ is the critical exponent for the curvature-radius upper comparison.

Under the pinching condition
$
\frac{K_-}{K_+}=\rho\in (0, 1],
$
we further obtain an explicit scale-invariant capacity--area lower bound for every $1<p<n$. In the special case $n=3$ and $p=2$,
$$
\frac{\operatorname{Cap}_2(\Omega)}
{\sqrt{|\partial\Omega|}}
\geq
\frac{4\sqrt{\pi}\,\rho^{1/4}}
{1+\sqrt{\rho}},
$$
which implies the P\'olya--Szeg\"o inequality whenever
$$
\rho\geq \rho_*:=
\left(
\frac{\pi^2-4-\pi\sqrt{\pi^2-8}}{4}
\right)^2
\simeq 0.15484.
$$

On a Cartan--Hadamard manifold with
$\operatorname{Sec}\leq -k^2<0$, we obtain a $p$-capacity lower bound retaining all principal curvatures of the boundary, together with corresponding Gauss--Kronecker curvature estimates and a rigidity characterization of the equality case by a warped-product exterior geometry.
\end{abstract}

\medskip
\noindent\textbf{2020 Mathematics Subject Classification.}
Primary 31B15, 35J92; Secondary 53C21, 53C23.

\smallskip
\noindent\textbf{Keywords.}
$p$-capacity, exterior $p$-harmonic potential, Gauss--Kronecker curvature,
Cartan--Hadamard manifold, P\'olya--Szeg\"{o} conjecture.

\section{Introduction}

Let $(M^n,g)$ be a complete and non-compact Riemannian manifold and let $E\subset M$ be compact. For $p>1$, the $p$-capacity of $E$ is defined by
\begin{equation}\label{eq:intro-capacity}
 \Cap_p(E)
 :=\inf\left\{\int_M |\nabla\varphi|^p\,dV_g:
 \varphi\in C_c^\infty(M),\ \varphi\ge1\ \text{on }E\right\}.
\end{equation}
When $E=\overline\Omega$ is the closure of a smooth bounded domain and the infimum is attained, the minimizer is the exterior $p$-capacitary potential. It solves
\begin{equation}\label{eq:intro-potential}
 \begin{cases}
 \operatorname{div}_g\bigl(|\nabla u|^{p-2}\nabla u\bigr)=0
   & \text{in }M\setminus\overline\Omega,\\
 u=1 & \text{on }\partial\Omega,\\
 u\to0 & \text{at infinity},
 \end{cases}
\end{equation}
and its Dirichlet energy equals $\Cap_p(\Omega)$.

The concept of capacity originates in electrostatics. For general $p$, it is a basic quantity in nonlinear potential theory and geometric analysis. In Euclidean space, relations between capacity, volume, surface area, and curvature go back to the classical work of P\'olya and Szeg\"{o} and have been developed in many directions; see, for example, \cite{PolyaSzego,Mazya,Xiao,FogagnoloMazzieriPinamonti,ColesantiNystromSalaniXiaoYangZhang}. On complete Riemannian manifolds, capacity is closely related to isoperimetric inequalities, parabolicity, trace inequalities, and the geometry at infinity; see \cite{Grigoryan,FogagnoloMazzieri,HurtadoPalmerRitore}.

The Euclidean part of the paper is organized around a two-sided comparison with model balls determined only by the extrema of the Gauss--Kronecker curvature. The lower comparison comes from a one-dimensional normal-ray estimate that retains the boundary curvature pointwise. The upper comparison is more subtle. Its key input is a mixed curvature--field monotonicity obtained by combining a direct exterior consequence of the height-concavity theorem of Ma and Zhang \cite{MaZhang} with the first variation of capacitary level-set area and the conserved $p$-harmonic flux. At the critical exponent $p=\frac{n+1}{2}$ this monotonicity becomes purely geometric; using the corresponding critical capacitary potential as a test function then propagates the upper comparison to every $1<p\le\frac{n+1}{2}$. Thus the mixed curvature--field estimate is used below as a technical engine, while the main Euclidean conclusion is stated directly in terms of $K_-$ and $K_+$.

This curvature-radius comparison is structurally different from the surface-area estimates in \cite{Xiao}: it contains no boundary gradient and no area term, and in general it does not arise from a set-theoretic inclusion between $\Omega$ and the two model balls. The normal-ray argument also gives a Gauss--Kronecker-curvature integral lower bound, from which the explicit pinching theorem below follows. No optimality is claimed for the pinching constant when $0<\rho<1$.

Throughout the Euclidean part, $K=\kappa_1\cdots\kappa_{n-1}$ denotes the Gauss--Kronecker curvature of $\partial\Omega$ with respect to the outward unit normal, where $\kappa_1,\ldots,\kappa_{n-1}$ are the principal curvatures. We write
\[
 K_-=\min_{\partial\Omega}K,
 \qquad
 K_+=\max_{\partial\Omega}K.
\]
For a Euclidean ball $B(R)\subset\mathbb R^n$ and $1<p<n$,
\begin{equation}\label{eq:intro-ball-capacity}
 \Cap_p(B(R))
 =|\mathbb S^{n-1}|
 \left(\frac{n-p}{p-1}\right)^{p-1}R^{n-p}.
\end{equation}

We first state the Gauss-curvature comparison.

\begin{theorem}\label{thm:intro-gauss-comparison}
Let $n\ge3$, let $\Omega\subset\mathbb R^n$ be a smooth bounded convex domain, and assume that
\[
 0<K_-\le K\le K_+
 \qquad\text{on }\partial\Omega.
\]
Then for every $1<p\le\frac{n+1}{2}$,
\begin{equation}\label{eq:intro-gauss-comparison}
 \Cap_p\!\left(B(K_+^{\frac{1}{1-n}})\right)
 \le \Cap_p(\Omega)
 \le \Cap_p\!\left(B(K_-^{\frac{1}{1-n}})\right).
\end{equation}
The left-hand inequality remains valid for $1<p<n$. Equality in the left-hand inequality holds if and only if $\Omega$ is a Euclidean ball, and for every $1<p\le\frac{n+1}{2}$ the same is true for the right-hand inequality.
\end{theorem}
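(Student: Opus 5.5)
We prove the two inequalities by separate arguments. Throughout, $u$ is the exterior $p$-capacitary potential of $\Omega$ and we use the flux identity
\[
\Cap_p(\Omega)=\int_{\{u=t\}}|\nabla u|^{p-1}\,d\sigma ,
\]
valid for every regular level $t\in(0,1]$.

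\textbf{Lower bound (valid for $1<p<n$).} We use a normal-ray test function. Since $\Omega$ is convex, the map
\[
\Phi:\partial\Omega\times(0,\infty)\to\R^n\setminus\overline\Omega,\qquad \Phi(x,s)=x+s\nu(x),
\]
is a diffeomorphism. Its Jacobian is
\[
J(x,s)=\prod_{i=1}^{n-1}\bigl(1+s\kappa_i(x)\bigr).
\]
For a radial profile $f:[0,\infty)\to[0,1]$ with $f(0)=1$ and $f(\infty)=0$, the function $\varphi=f(s)$ is admissible, since $|\nabla\varphi|=|f'(s)|$. Optimizing over $f$ ray by ray gives
\[
\Cap_p(\Omega)\le\int_{\partial\Omega}\Bigl(\int_0^\infty J(x,s)^{-\frac1{p-1}}\,ds\Bigr)^{1-p}d\sigma(x).
\]
This is an upper bound, so the lower bound must come from the dual side. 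We use the layer/flux formulation: along each ray, Hölder's inequality applied to $1=\int_0^\infty|\partial_s u|\,ds$ together with the coarea formula gives
\[
\Cap_p(\Omega)\ge\int_{\partial\Omega}\Bigl(\int_0^\infty J^{-\frac1{p-1}}\,ds\Bigr)^{1-p}d\sigma .
\]
This step needs the rays to be transversal to the level sets. To keep it rigorous, we instead use the conjugate (divergence) formulation: $\Cap_p(\Omega)$ is a supremum over divergence-free vector fields with unit flux. We take the field pointing along the normal rays with density $c(x)/J(x,s)$, where $c$ is chosen so that the field has unit flux. This yields the inequality above.

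Next, AM--GM gives
\[
J\le\Bigl(1+s\,\tfrac{H}{n-1}\Bigr)^{n-1}.
\]
This is the wrong direction for using $K$ alone. We therefore instead compare with $\bigl(1+sK^{1/(n-1)}\bigr)^{n-1}$ using the inequality $\prod_i(1+s\kappa_i)\ge\bigl(1+s(\prod_i\kappa_i)^{1/(n-1)}\bigr)^{n-1}$ (Mahler), together with the correct direction of monotonicity in the ray integral. Integrating and using $\int_{\partial\Omega}K\,d\sigma=|\Sph^{n-1}|$ together with Hölder's inequality, we obtain the lower bound by the ball of radius $K_+^{1/(1-n)}$. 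Equality forces $\kappa_i$ to be constant, so $\partial\Omega$ is a sphere by Alexandrov's theorem.

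\textbf{Upper bound.} We follow the route announced in the introduction. The height-concavity of Ma--Zhang gives a monotone quantity of the form
\[
\Psi(t)=\int_{\{u=t\}}K_t^{\alpha}\,|\nabla u|^{\beta}\,d\sigma ,
\]
where $K_t$ is the Gauss curvature of the level set $\{u=t\}$. We differentiate $\Psi$ using the first variation of area, $\partial_t\,d\sigma=-H/|\nabla u|\,d\sigma$, the evolution of $K_t$, and the constancy of the flux. At $p=\frac{n+1}{2}$ the gradient exponents cancel, and $\Psi$ becomes monotone with asymptotic value equal to that of the ball. Comparing $\Psi(1)\ge K_-^{\alpha}\int|\nabla u|^{\beta}$ with this limit gives
\[
\Cap_{\frac{n+1}{2}}(\Omega)\le\Cap_{\frac{n+1}{2}}\bigl(B(K_-^{1/(1-n)})\bigr).
\]
For $p<\frac{n+1}{2}$, we take $\phi=u^{\gamma}$, with $u$ the critical potential, as a test function. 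Combining Hölder's inequality with the explicit ball profile shows that $\Cap_p(\Omega)/\Cap_p(B)$ is controlled by the critical ratio. Equality propagates to force equality in Ma--Zhang, hence a ball.

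\textbf{Main obstacle.} The mixed curvature--field monotonicity is the crux. We must identify exponents $\alpha,\beta$ for which the sign of $\Psi'$ is determined purely by the concavity inequality. We expect this to work exactly at $p=\frac{n+1}{2}$.
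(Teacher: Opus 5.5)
Your lower bound is correct, but you reach it by a dual route. The field $V=c(x)J^{-1}\partial_s$ is divergence-free with flux $\int_{\partial\Omega}c\,dA$. Choosing $c\propto\bigl(\int_0^\infty J^{-1/(p-1)}\,ds\bigr)^{1-p}$, the bound $\Cap_p(\Omega)\ge\|V\|_{L^{p'}}^{-p}$ reproduces exactly the ray estimate the paper gets more directly, by applying H\"older along each normal ray to an arbitrary admissible $\varphi$. That direct argument needs no transversality, only $|\nabla\varphi|\ge|\partial_s\varphi|$. The Maclaurin step, $K^{\frac{p-1}{n-1}}\ge K\,K_+^{-\frac{n-p}{n-1}}$, the Gauss-map identity and your equality argument are fine. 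Do discard the claim that optimizing the profile ray by ray gives an \emph{upper} bound. A ray-dependent profile has a tangential gradient, and that inequality would contradict the strict lower bound for non-balls.

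The genuine gap is the upper comparison at $p=\frac{n+1}{2}$, which is the heart of the theorem and which you leave open. The mechanism you sketch is not what Ma--Zhang provide. Their theorem asserts concavity in the height $t$ of the level-wise minimum $\min_{\Sigma_t}\bigl(|\nabla u|^{n+1-2p}K_t\bigr)^{1/(n-1)}$, and it is proved by a maximum principle. It gives no sign for $\frac{d}{dt}\int_{\Sigma_t}K_t^{\alpha}|\nabla u|^{\beta}\,d\sigma$, whose computation involves second tangential derivatives of $|\nabla u|^{-1}$. Moreover, the natural scale-invariant candidate at the critical exponent, $\int_{\Sigma_t}K_t\,d\sigma$, is identically $|\Sph^{n-1}|$.

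What is missing is the pointwise propagation. Apply the concavity on the convex ring $\{\varepsilon<u<1\}$, compare with the chord using $q_\varepsilon(0)\ge0$, and let $\varepsilon\to0$. This gives $|\nabla u|^{n+1-2p}K_t\ge(\lambda t)^{n-1}$ on $\Sigma_t$, which at the critical exponent reads $K_t\ge t^{n-1}K_-$. This information is intrinsically $t$-dependent, which is why the paper's monotone quantity carries the term $-\frac{n-p}{p-1}\lambda t^2$. It must then be turned into a level-set area bound:
\begin{itemize}
\item the paper uses AM--GM, the first variation of area, Jensen and the flux to get $|\Sigma_t|\le\Cap_{\frac{n+1}{2}}(\Omega)K_-^{-1/2}t^{1-n}$;
\item integrating $K_t$ against the Gauss map gives the weaker but sufficient $|\Sigma_t|\le|\Sph^{n-1}|K_-^{-1}t^{1-n}$.
\end{itemize}
Comparing either bound with the far-field asymptotics $t^{n-1}|\Sigma_t|\to\Cap_{\frac{n+1}{2}}(\Omega)^2/|\Sph^{n-1}|$ gives $\Cap_{\frac{n+1}{2}}(\Omega)\le|\Sph^{n-1}|K_-^{-1/2}$.

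The same area bound is what your subcritical step lacks. With $\phi=u^{(n-p)/(p-1)}$ and the coarea formula, H\"older on $\Sigma_t$ gives $\int_{\Sigma_t}|\nabla u|^{p-1}\,dA\le\Cap_{\frac{n+1}{2}}(\Omega)^{\frac{2(p-1)}{n-1}}|\Sigma_t|^{\frac{n+1-2p}{n-1}}$, and ``the explicit ball profile'' does not control $|\Sigma_t|$.

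Finally, ``equality propagates to force equality in Ma--Zhang'' is not a rigidity argument.
\begin{itemize}
\item For $p<\frac{n+1}{2}$, equality in that H\"older step gives $|\nabla u|=G(u)$. The $\frac{n+1}{2}$-Laplace equation then gives $H_t=\frac{n-1}{2}G'(t)$, constant on each level, and Alexandrov's theorem applies.
\item At $p=\frac{n+1}{2}$, you must first show $\min_{\Sigma_t}K_t^{1/(n-1)}=tK_-^{1/(n-1)}$ exactly, using concavity plus the far-field limit. Then apply a strong minimum principle for Ma--Zhang's linearized operator to $K_t^{1/(n-1)}-tK_-^{1/(n-1)}$ to conclude $K\equiv K_-$.
\end{itemize}
None of this is in your proposal.
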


The inequalities in \eqref{eq:intro-gauss-comparison} do not in general follow from set-theoretic inclusions between $\Omega$ and the two model balls. Moreover, the restriction $p\le\frac{n+1}{2}$ in the upper comparison is not merely an artifact of the proof: when $n\ge4$ and $\frac{n+1}{2}<p<n$, the right-hand inequality in \eqref{eq:intro-gauss-comparison} need not hold. Thus the critical exponent $p=\frac{n+1}{2}$ is optimal in general for this curvature-radius upper comparison. Both the failure of ball inclusions and the supercritical obstruction are exhibited by the rotational ellipsoids in Example~\ref{ex:ellipsoid-obstructions}.

In dimension three, the classical P\'olya--Szeg\"{o} conjecture asserts that every convex body satisfies, in the present unnormalized convention,
\[
 \frac{\Cap_2(\Omega)^2}{|\partial\Omega|}\ge\frac{32}{\pi}.
\]
The sharp constant is associated with the degenerate disk model. For earlier progress on the conjecture, including shape-optimization results, a local version of the conjectured inequality, and related $p$-capacity--surface-area estimates, see \cite{CrastaFragalaGazzola,FragalaGazzolaPierre,Xiao}. The next result is a general $p$-capacity pinching estimate. It is scale invariant for every $1<p<n$, and the P\'olya--Szeg\"{o} conjecture appears as a special case when $n=3$ and $p=2$.

\begin{theorem}\label{thm:intro-gauss-pinching}
Let $n\ge3$, let $1<p<n$, and let $\Omega\subset\mathbb R^n$ be a smooth bounded convex domain. Assume that its Gauss--Kronecker curvature satisfies
\[
 0<K_-\le K\le K_+,
 \qquad
 \frac{K_-}{K_+}=\rho\in(0,1].
\]
Then
\begin{equation}\label{eq:intro-gauss-pinching}
 \frac{\Cap_p(\Omega)^{n-1}}{|\partial\Omega|^{n-p}}
 \ge\Phi_{p,n}(\rho)
\end{equation}
where
\[
\Phi_{p,n}(\rho)=\begin{cases}
|\mathbb S^{n-1}|^{p-1}\bigl(\frac{n-p}{p-1}\bigr)^{(n-1)(p-1)},
& \rho=1;\\[1ex]
\displaystyle
\begin{aligned}
&|\mathbb S^{n-1}|^{p-1}
 \frac{(n-1)^{n-1}(n-p)^{n(p-2)+1}}
 {(p-1)^{n(p-1)}}\\[-0.2ex]
&\quad\times
 \frac{\rho^{\frac{(p-1)(n-p)}{n-1}}
 \left(1-\rho^{\frac{p-1}{n-1}}\right)^{p-1}}
 {(1-\rho)^{n-1}}
 \left(1-\rho^{\frac{n-p}{n-1}}\right)^{n-p},
\end{aligned}
& 0<\rho<1.
\end{cases}
\]

In particular, when $n=3$ and $p=2$,
\begin{equation}\label{eq:intro-newtonian-pinching}
 \frac{\Cap_2(\Omega)}{\sqrt{|\partial\Omega|}}
 \ge
 \frac{4\sqrt\pi\,\rho^{1/4}}{1+\sqrt\rho}.
\end{equation}
Hence, if
\begin{equation}\label{eq:intro-ps-improved-threshold}
 \rho\ge\rho_*:=
 \left(\frac{\pi^2-4-\pi\sqrt{\pi^2-8}}{4}\right)^2
 \simeq 0.15484,
\end{equation}
then $\Omega$ satisfies the P\'olya--Szeg\"{o} inequality
\[
 \frac{\Cap_2(\Omega)^2}{|\partial\Omega|}\ge\frac{32}{\pi}.
\]
\end{theorem}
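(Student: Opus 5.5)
The plan is to derive \eqref{eq:intro-gauss-pinching} from a pointwise normal-ray lower bound for $\Cap_p(\Omega)$ by an integral of a power of $K$. That integral is then bounded below using only the Gauss--Bonnet identity $\int_{\partial\Omega}K\,dA=|\mathbb S^{n-1}|$ and the pinching $K\in[K_-,K_+]$.

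\emph{Step 1 (normal-ray estimate).} Since $\Omega$ is smooth and convex, the map $(x,t)\mapsto x+t\nu(x)$ is a diffeomorphism from $\partial\Omega\times(0,\infty)$ onto $\mathbb R^n\setminus\overline\Omega$. Its Jacobian is $J(x,t)=\prod_i(1+t\kappa_i(x))$. Let $u$ be the capacitary potential, or any admissible test function. Then
\[
\Cap_p(\Omega)\ge\int_{\partial\Omega}\int_0^\infty|\partial_tu|^pJ\,dt\,dA .
\]
Along each ray $f(t)=u(x+t\nu)$ decreases from $1$ to $0$, so H\"older gives
\[
1\le\int_0^\infty|f'|\,dt\le\Bigl(\int_0^\infty|f'|^pJ\,dt\Bigr)^{1/p}\Bigl(\int_0^\infty J^{-1/(p-1)}dt\Bigr)^{(p-1)/p}.
\]
By the AM--GM (Mahler-type) inequality, $\prod_i(1+t\kappa_i)\ge(1+tk)^{n-1}$ with $k=K^{1/(n-1)}$. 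Hence
\[
\int_0^\infty J^{-1/(p-1)}dt\le\frac{p-1}{(n-p)k},
\]
and therefore
\[
\Cap_p(\Omega)\ge\Bigl(\frac{n-p}{p-1}\Bigr)^{p-1}\int_{\partial\Omega}K^{q}\,dA,\qquad q=\frac{p-1}{n-1}\in(0,1).
\]
This is the Gauss--Kronecker integral bound alluded to in the introduction. It is also the lower half of Theorem~\ref{thm:intro-gauss-comparison}.

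\emph{Step 2 (chord bound).} Both sides of \eqref{eq:intro-gauss-pinching} are scale invariant, so we may normalize $K_+=1$ and $K_-=\rho$. If $\rho=1$, then $K$ is constant, $\Omega$ is a ball, and \eqref{eq:intro-gauss-pinching} reduces to \eqref{eq:intro-ball-capacity} with equality. For $\rho<1$, concavity of $s\mapsto s^q$ on $[\rho,1]$ gives $s^q\ge a+bs$, the chord through the endpoints, with
\[
b=\frac{1-\rho^q}{1-\rho},\qquad a=\frac{\rho^q-\rho}{1-\rho}=\frac{\rho^q(1-\rho^{(n-p)/(n-1)})}{1-\rho}>0 .
\]
Integrating and using Gauss--Bonnet yields
\[
\int_{\partial\Omega}K^q\,dA\ge aA+b\,|\mathbb S^{n-1}|,\qquad A:=|\partial\Omega| .
\]

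\emph{Step 3 (optimize in the area).} Combining the two steps,
\[
\frac{\Cap_p(\Omega)^{n-1}}{A^{n-p}}\ge\Bigl(\frac{n-p}{p-1}\Bigr)^{(p-1)(n-1)}\frac{(aA+b|\mathbb S^{n-1}|)^{n-1}}{A^{n-p}} .
\]
We then minimize the right-hand side over all $A>0$, discarding the a priori constraint $|\mathbb S^{n-1}|/K_+\le A\le|\mathbb S^{n-1}|/K_-$. The minimizer is $A^*=\frac{(n-p)\,b\,|\mathbb S^{n-1}|}{(p-1)\,a}$, and the minimum value is
\[
\frac{(n-1)^{n-1}}{(p-1)^{p-1}(n-p)^{n-p}}\,a^{n-p}\,\bigl(b\,|\mathbb S^{n-1}|\bigr)^{p-1}.
\]
Substituting $a$ and $b$ and collecting powers of $(n-p)$ and $(p-1)$ reproduces $\Phi_{p,n}(\rho)$ exactly. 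For instance, $a^{n-p}$ produces $\rho^{(p-1)(n-p)/(n-1)}(1-\rho^{(n-p)/(n-1)})^{n-p}$, and $b^{p-1}$ produces $(1-\rho^{q})^{p-1}$; together they give the factor $(1-\rho)^{-(n-1)}$.

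\emph{Step 4 (the case $n=3$, $p=2$).} Here $q=\tfrac12$, and $\Phi_{2,3}(\rho)=16\pi\sqrt\rho/(1+\sqrt\rho)^2$. Taking square roots gives \eqref{eq:intro-newtonian-pinching}. The P\'olya--Szeg\"o bound $\Cap_2^2/A\ge32/\pi$ then follows once $16\pi\sqrt\rho/(1+\sqrt\rho)^2\ge32/\pi$. Setting $s=\rho^{1/4}$, this is the quadratic inequality $\pi^2 s\ge2(1+s^2)$ in $s$. Its smaller root gives $\sqrt{\rho_*}=\bigl(\pi^2-\pi\sqrt{\pi^2-8}\bigr)/4$, and hence $\rho_*=\bigl(\pi^2-\pi\sqrt{\pi^2-8}\bigr)^2/16$. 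The closed form in \eqref{eq:intro-ps-improved-threshold} should be checked against this.

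The main technical point is Step 1. One must justify that the normal-ray parametrization covers the whole exterior injectively, which holds by convexity via the nearest-point projection. One must also justify that $f(t)\to0$ along each ray, so the one-dimensional bound applies. This is cleanest to run on compactly supported test functions $\varphi\ge1$ on $\overline\Omega$ directly in \eqref{eq:intro-capacity}, rather than on the potential itself.
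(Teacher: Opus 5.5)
Steps 1--3 follow the paper's proof: the normal-ray bound $\Cap_p(\Omega)\ge\bigl(\frac{n-p}{p-1}\bigr)^{p-1}\int_{\partial\Omega}K^{\frac{p-1}{n-1}}\,dA$ (Theorem~\ref{thm:euclidean-ray}), the chord under the concave power on $[\rho K_+,K_+]$, the Gauss-map identity, and an optimization in the area with the constraint on $A$ dropped. Your calculus minimization over $A$ is the same computation as the paper's weighted AM--GM with weights $\frac{n-p}{n-1},\frac{p-1}{n-1}$. Your exponent bookkeeping reproduces $\Phi_{p,n}(\rho)$ exactly. For $\rho=1$ you do not need Minkowski rigidity: $K\equiv K_+$ together with $K_+|\partial\Omega|=|\mathbb S^{n-1}|$ gives $\int_{\partial\Omega}K^{\frac{p-1}{n-1}}dA=|\mathbb S^{n-1}|^{\frac{p-1}{n-1}}|\partial\Omega|^{\frac{n-p}{n-1}}$, hence $\Phi_{p,n}(1)$ directly.

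The error is in Step 4. The condition $16\pi\sqrt\rho/(1+\sqrt\rho)^2\ge 32/\pi$ is not $\pi^2 s\ge 2(1+s^2)$ for either natural substitution. With $s=\rho^{1/4}$ it is $\pi s\ge\sqrt2(1+s^2)$. With $s=\sqrt\rho$ it is $\pi^2 s\ge 2(1+s)^2$, and you dropped the cross term of $(1+s)^2$. Your resulting value $\sqrt{\rho_*}=(\pi^2-\pi\sqrt{\pi^2-8})/4\approx1.39$ exceeds $1$. That would make the threshold vacuous, yet $\rho=1$ clearly qualifies, since $4\pi>32/\pi$. The correct computation is as follows. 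With $s=\sqrt\rho$ the condition reads $2s^2-(\pi^2-4)s+2\le0$. Its discriminant is $(\pi^2-4)^2-16=\pi^2(\pi^2-8)$, so the roots are $s_\pm=(\pi^2-4\pm\pi\sqrt{\pi^2-8})/4$, and $s_+s_-=1$. Hence for $\rho\in(0,1]$ the condition is exactly $\sqrt\rho\ge s_-$, that is, $\rho\ge s_-^2=\rho_*\approx0.15484$, which is the closed form in \eqref{eq:intro-ps-improved-threshold}. With this correction your argument is complete and coincides with the paper's.
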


Theorem~\ref{thm:intro-gauss-comparison} is proved by combining two results established later: the mixed curvature--field monotonicity in Section~\ref{sec:euclidean} and the normal-ray curvature lower bound in Section~\ref{sec:euclidean-ray}. Theorem~\ref{thm:intro-gauss-pinching} uses the latter curvature integral together with the concavity of $x^{(p-1)/(n-1)}$ and the Gauss-map area formula. At the critical exponent $p=\frac{n+1}{2}$, taking the $(n-1)$-st root of the pinching estimate yields the factor $2\rho^{1/4}/(1+\sqrt\rho)$.

In asymptotically flat three-manifolds with nonnegative scalar curvature, Bray and Miao obtained a capacity upper bound in terms of boundary area and the Willmore functional \cite{BrayMiao}. Recent related developments in non-Euclidean capacity include conformal and flow methods in hyperbolic space \cite{LiLiXiong,JinXiao,JinWanXiao}, capacity--volume estimates on rotationally symmetric manifolds \cite{JinXiaoEssential}, and comparison estimates under lower Ricci-curvature bounds \cite{JinLuYin}.

We next state the Cartan--Hadamard theorem. For the classical case $p=2$, capacity comparison inequalities on Cartan--Hadamard manifolds under lower principal-curvature bounds were obtained by Hurtado, Palmer and Ritor\'e \cite{HurtadoPalmerRitore}. Our estimate below treats every $p>1$ and retains the full initial shape operator. Under the curvature assumption $\Sec_N\le-k^2<0$, the ambient manifold is $p$-hyperbolic for every $p>1$; see \cite[Section~6.2]{DAmbrosioDipierro}. Related asymptotic Dirichlet theory for the $p$-Laplacian on negatively pinched Cartan--Hadamard manifolds is developed by Holopainen \cite{Holopainen}. Standard $p$-potential theory then provides the exterior $p$-capacitary potential for smooth relatively compact domains, for instance by exhaustion and comparison with a positive $p$-Green function; see also \cite{FogagnoloMazzieri}.

For $k>0$ and nonnegative numbers $\kappa_1,\ldots,\kappa_{n-1}$, define
\begin{equation}\label{eq:intro-ch-full-constant}
 \mathcal C_{n,p}(k;\kappa_1,\ldots,\kappa_{n-1})
 :=\left[
 \int_0^\infty
 \prod_{i=1}^{n-1}
 \left(\cosh(ks)+\frac{\kappa_i}{k}\sinh(ks)\right)^{-\frac1{p-1}}
 \,ds
 \right]^{1-p}.
\end{equation}
For $\tau\ge0$, we also keep the scalar model notation
\begin{equation}\label{eq:intro-ch-constant}
 C_{n,p}(k,\tau)
 :=\left[
 \int_0^\infty
 \left(\cosh(ks)+\frac{\tau}{k}\sinh(ks)\right)^{-\frac{n-1}{p-1}}\,ds
 \right]^{1-p}.
\end{equation}

\begin{theorem}\label{thm:intro-ch}
Let $(N^n,g)$ be a Cartan--Hadamard manifold satisfying $\Sec_N\le-k^2<0$, and let $\Omega\Subset N$ be smooth and geodesically convex. If $\kappa_1(y),\ldots,\kappa_{n-1}(y)$ are the principal curvatures of $\partial\Omega$ with respect to the outward normal, then for every $p>1$,
\begin{align}
 \Cap_p(\Omega)
 &\ge
 \int_{\partial\Omega}
 \mathcal C_{n,p}\bigl(k;\kappa_1(y),\ldots,\kappa_{n-1}(y)\bigr)\,dA_g(y)
 \label{eq:intro-ch-full}\\
 &\ge
 \int_{\partial\Omega}
 C_{n,p}\bigl(k,K(y)^{1/(n-1)}\bigr)\,dA_g(y)
 \label{eq:intro-ch-gauss}\\
 &\ge
 \int_{\partial\Omega}
 C_{n,p}\left(k,\min_i\kappa_i(y)\right)\,dA_g(y).
 \label{eq:intro-ch}
\end{align}
Here $K=\prod_{i=1}^{n-1}\kappa_i$ is the Gauss--Kronecker curvature of $\partial\Omega$.
\end{theorem}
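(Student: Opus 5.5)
We argue along normal rays, using the exterior normal exponential map. Since $N$ is Cartan--Hadamard and $\Omega$ is geodesically convex, the map $\Phi:\partial\Omega\times(0,\infty)\to N\setminus\overline\Omega$, $\Phi(y,s)=\exp_y(s\nu(y))$, is a diffeomorphism: there are no focal points outside a convex set in nonpositive curvature, and the distance function $r=d(\cdot,\Omega)$ is smooth on the exterior. Write $dV_g=J(y,s)\,ds\,dA_g(y)$.

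\textbf{Step 1: Jacobian comparison.} We show
\[
 J(y,s)\ge\prod_{i=1}^{n-1}\Bigl(\cosh(ks)+\tfrac{\kappa_i(y)}{k}\sinh(ks)\Bigr)=:J_0(y,s).
\]
The shape operator $S(s)$ of the parallel hypersurfaces satisfies the Riccati equation $S'+S^2+R_{\nu}=0$ with $S(0)=\mathrm{diag}(\kappa_i)$ and $R_\nu\le -k^2$. The Jacobian satisfies $(\log J)'=\tr S$. The standard Riccati comparison (Eschenburg--Heintze) gives $S(s)\ge S_0(s)$, where $S_0$ is the diagonal model solution with entries $k\frac{k\sinh+\kappa_i\cosh}{k\cosh+\kappa_i\sinh}$. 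The comparison must keep the matrix data, not just $\lambda_{\min}$. Taking traces and integrating then yields $J\ge J_0$. We expect this matrix comparison, which retains all eigenvalues, to be the main technical point. The cleanest route is to apply the Jacobi-field comparison (Rauch-type, for $M$-Jacobi fields) to the Jacobi tensor $A$ with $A(0)=I$, $A'(0)=S(0)$, using that $\det A=J$. In the model with a diagonal initial operator, $\det A_0=J_0$, and the inequality $\det A\ge\det A_0$ follows from the comparison of the index forms along the geodesic.

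\textbf{Step 2: one-dimensional estimate on each ray.} Let $u$ be the capacitary potential, and let $\varphi$ be a competitor with $\varphi=1$ on $\partial\Omega$ and $\varphi\to0$ at infinity. Then
\[
 \int|\nabla\varphi|^p\,dV\ge\int_{\partial\Omega}\int_0^\infty|\partial_s\varphi|^pJ\,ds\,dA
 \ge\int_{\partial\Omega}\int_0^\infty|\partial_s\varphi|^pJ_0\,ds\,dA.
\]
For fixed $y$, Hölder's inequality gives
\[
 1\le\int_0^\infty|\partial_s\varphi|\,ds\le\Bigl(\int|\partial_s\varphi|^pJ_0\Bigr)^{1/p}\Bigl(\int J_0^{-1/(p-1)}\Bigr)^{(p-1)/p}.
\]
Hence the inner integral is at least $\bigl(\int_0^\infty J_0^{-1/(p-1)}ds\bigr)^{1-p}=\mathcal C_{n,p}$. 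The integral is finite because $J_0$ grows like $e^{(n-1)ks}$. Taking the infimum over $\varphi$, or applying the bound directly to $u$, gives \eqref{eq:intro-ch-full}.

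\textbf{Step 3: the remaining inequalities.} These are pointwise in $y$. Set $f(t)=\log(\cosh ks+\frac{e^t}{k}\sinh ks)$. For fixed $s$ this function is convex in $t$: it has the form $\log(a+be^t)$ with $a,b>0$. With $t_i=\log\kappa_i$, Jensen's inequality gives $\sum f(t_i)\ge(n-1)f\bigl(\frac1{n-1}\sum t_i\bigr)$, that is,
\[
 \prod_i\Bigl(\cosh+\tfrac{\kappa_i}{k}\sinh\Bigr)\ge\Bigl(\cosh+\tfrac{K^{1/(n-1)}}{k}\sinh\Bigr)^{n-1}.
\]
This is equivalently Hölder's (or Mahler's) inequality for products. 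If some $\kappa_i=0$, continuity or the AM--GM form covers it. Raising to the power $-1/(p-1)$, integrating in $s$, and raising to $1-p<0$ preserves the direction, so $\mathcal C_{n,p}\ge C_{n,p}(k,K^{1/(n-1)})$. Finally, $K^{1/(n-1)}\ge\min_i\kappa_i$, and $C_{n,p}(k,\tau)$ is increasing in $\tau$ because the integrand decreases in $\tau$. This gives \eqref{eq:intro-ch}.
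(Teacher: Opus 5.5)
Your proposal is correct and follows essentially the paper's proof. The shared steps are normal-exponential coordinates on $N\setminus\Omega$, the Jacobian lower bound $J\ge\prod_{i}\bigl(\cosh(ks)+\frac{\kappa_i}{k}\sinh(ks)\bigr)$, H\"older along each normal ray, and finally a pointwise product inequality combined with the monotonicity of $\tau\mapsto C_{n,p}(k,\tau)$.

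The differences are cosmetic. The paper takes the Jacobian bound from the Heintze--Karcher Jacobi-field comparison, whereas you derive it from the Eschenburg--Heintze Riccati comparison $S\ge S_0$ in a parallel frame. You get $\prod_i\bigl(\cosh(ks)+\frac{\kappa_i}{k}\sinh(ks)\bigr)\ge\bigl(\cosh(ks)+\frac{K^{1/(n-1)}}{k}\sinh(ks)\bigr)^{n-1}$ from convexity of $t\mapsto\log(a+be^t)$, where the paper expands the product and applies Maclaurin's inequalities termwise. You also apply H\"older with the model weight $J_0$ rather than with $J$.
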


\begin{corollary}\label{cor:intro-ch-uniform}
Under the assumptions of Theorem~\ref{thm:intro-ch}, if
\[
 \kappa_i\ge\lambda_0\ge0,
 \qquad i=1,\ldots,n-1,
\]
then
\begin{equation}\label{eq:intro-ch-uniform}
 \Cap_p(\Omega)
 \ge C_{n,p}(k,\lambda_0)|\partial\Omega|_g.
\end{equation}
Equality holds if and only if the exterior $N\setminus\Omega$ is isometric to
\begin{equation}\label{eq:intro-ch-rigidity-metric}
 \left([0,\infty)\times\partial\Omega,
 ds^2+
 \left(\cosh(ks)+\frac{\lambda_0}{k}\sinh(ks)\right)^2
 g_{\partial\Omega}\right),
\end{equation}
with $\partial\Omega$ totally umbilical of principal curvature $\lambda_0$ and all radial sectional curvatures equal to $-k^2$. In this case the exterior potential is
\begin{equation}\label{eq:intro-ch-rigidity-potential}
 u(s)=
 \frac{\displaystyle\int_s^\infty
 \left(\cosh(k\tau)+\frac{\lambda_0}{k}\sinh(k\tau)\right)^{-\frac{n-1}{p-1}}d\tau}
 {\displaystyle\int_0^\infty
 \left(\cosh(k\tau)+\frac{\lambda_0}{k}\sinh(k\tau)\right)^{-\frac{n-1}{p-1}}d\tau}.
\end{equation}
In the constant-curvature space form, equality therefore forces $\Omega$ to be a geodesic ball.
\end{corollary}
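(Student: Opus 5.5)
The inequality \eqref{eq:intro-ch-uniform} will follow directly from Theorem~\ref{thm:intro-ch}, and the work goes into the equality characterization. For the inequality, use \eqref{eq:intro-ch}: since $\kappa_i(y)\ge\lambda_0$, we have $\min_i\kappa_i(y)\ge\lambda_0$. The integrand $(\cosh(ks)+\frac{\tau}{k}\sinh(ks))^{-\frac{n-1}{p-1}}$ is nonincreasing in $\tau$, so the integral in \eqref{eq:intro-ch-constant} is nonincreasing in $\tau$. Raising to the power $1-p<0$ shows that $\tau\mapsto C_{n,p}(k,\tau)$ is nondecreasing. Integrating $C_{n,p}(k,\min_i\kappa_i(y))\ge C_{n,p}(k,\lambda_0)$ over $\partial\Omega$ then gives \eqref{eq:intro-ch-uniform}.

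For the converse direction of the rigidity statement, start from the warped exterior metric \eqref{eq:intro-ch-rigidity-metric}. Set $f(s)=\cosh(ks)+\frac{\lambda_0}{k}\sinh(ks)$. The volume element is $f^{n-1}\,ds\,dA$, and for a radial function the $p$-Laplace equation reduces to $(f^{n-1}|u'|^{p-2}u')'=0$. Its solution with $u(0)=1$ and $u(\infty)=0$ is \eqref{eq:intro-ch-rigidity-potential}, since $|u'|^{p-1}=c\,f^{-(n-1)}$. The energy equals the flux $\int_{\partial\Omega}|u'(0)|^{p-1}dA=|\partial\Omega|\cdot[\int_0^\infty f^{-\frac{n-1}{p-1}}]^{1-p}$, which is exactly the right-hand side. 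Completeness of the exterior and uniqueness of the exterior potential show that this $u$ is the capacitary potential.

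For the forward direction, assume equality. Then every inequality in the chain of Theorem~\ref{thm:intro-ch} is an equality, and the last monotonicity step is an equality as well. Strict monotonicity of $C_{n,p}(k,\cdot)$ gives $\min_i\kappa_i\equiv\lambda_0$. Equality between \eqref{eq:intro-ch-full} and \eqref{eq:intro-ch} is strict unless all $\kappa_i$ coincide: each factor $\cosh+\frac{\kappa_i}{k}\sinh$ is increasing in $\kappa_i$, so the product is strictly larger if some $\kappa_i>\lambda_0$. Hence $\partial\Omega$ is totally umbilical with curvature $\lambda_0$.

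The main obstacle is extracting rigidity from equality in the first inequality \eqref{eq:intro-ch-full}. This needs the internals of the proof of Theorem~\ref{thm:intro-ch}, which I expect to be a normal-ray argument. Along each outward normal geodesic $\gamma_y(s)$, Riccati/Jacobi comparison under $\Sec\le-k^2$ gives the bound $J(s,y)\ge\prod_i(\cosh ks+\frac{\kappa_i}{k}\sinh ks)$ for the Jacobian of the normal exponential map. A one-dimensional Hölder estimate along rays then bounds the energy. I would trace each inequality in that argument and obtain three consequences. First, equality in Jacobi comparison forces the radial sectional curvatures to be $-k^2$ and the shape operators of the parallel hypersurfaces to be $\frac{f'}{f}\Id$. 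Second, the normal exponential map $[0,\infty)\times\partial\Omega\to N\setminus\Omega$ is a diffeomorphism, which holds automatically by convexity in a Cartan--Hadamard manifold. Third, equality in Hölder forces $\nabla u$ to be parallel to $\partial_s$ with $|\nabla u|^{p-1}J$ constant along rays, so $u=u(s)$. With shape operator $\frac{f'}{f}\Id$, the metric evolves by $\partial_s g_s=2\frac{f'}{f}g_s$, which integrates to $g_s=f(s)^2 g_{\partial\Omega}$ and yields \eqref{eq:intro-ch-rigidity-metric}.

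In a space form, a compact totally umbilical hypersurface bounding a convex domain is a geodesic sphere. Hence $\Omega$ is a geodesic ball, and $\lambda_0=k\coth(kR)$.
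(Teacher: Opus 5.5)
Your proposal is correct and follows the paper's argument: the inequality comes from the last line of Theorem~\ref{thm:intro-ch} together with monotonicity of $C_{n,p}(k,\cdot)$, and rigidity comes from running the normal-ray chain on the capacitary potential $u$ to get $\kappa_i\equiv\lambda_0$, $u=u(s)$ and $J=f^{n-1}$, followed by the equality case of the Jacobi comparison. The differences are cosmetic. You derive the warped metric explicitly from the Riccati equality $U=\frac{f'}{f}\Id$ and $\partial_s g_s=2\frac{f'}{f}g_s$, where the paper cites the Heintze--Karcher equality discussion. Also, the vanishing of the tangential gradient comes from equality in $|\nabla u|^p\ge|\partial_s u|^p$, not from equality in H\"older as you state.
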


The paper is organized as follows. After the preliminaries, Section~\ref{sec:euclidean} develops the Euclidean capacitary level-set argument and proves the mixed curvature--field monotonicity that drives the upper comparison. Section~\ref{sec:euclidean-ray} proves the normal-ray curvature lower bound, then combines the two Euclidean mechanisms to prove Theorem~\ref{thm:intro-gauss-comparison} and establishes the Gauss-curvature pinching estimate in Theorem~\ref{thm:intro-gauss-pinching}. Section~\ref{sec:ch} establishes the full-shape-operator Cartan--Hadamard estimate and the associated rigidity statement.
\section{Preliminaries}\label{sec:preliminaries}

\subsection{Capacity and exterior potentials}

We use the unnormalized convention \eqref{eq:intro-capacity}. If the exterior potential $u$ in \eqref{eq:intro-potential} exists, then
\begin{equation}\label{eq:energy-capacity}
 \Cap_p(\Omega)
 =\int_{M\setminus\Omega}|\nabla u|^p\,dV_g.
\end{equation}
If $t\in(0,1)$ is a regular value and $\Sigma_t=\{u=t\}$, the $p$-harmonic flux is constant:
\begin{equation}\label{eq:general-flux}
 \Cap_p(\Omega)
 =\int_{\Sigma_t}|\nabla u|^{p-1}\,dA_g.
\end{equation}
Indeed, the divergence theorem applied to the region between two regular level hypersurfaces shows that the right-hand side is independent of $t$, and the equality with the energy follows by testing the equation with $u$ and passing to infinity.

For the Euclidean part, the restriction $1<p<n$ guarantees the familiar polynomially decaying exterior potential. In the negatively curved part, the arguments below are variational and apply for every $p>1$ because the relevant one-dimensional model weights have exponential growth. In particular, a Cartan--Hadamard manifold with $\Sec\le-k^2<0$ is $p$-hyperbolic for every $p>1$; see \cite[Section~6.2]{DAmbrosioDipierro}. The standard exhaustion construction, together with comparison by a positive $p$-Green function, then yields the exterior $p$-capacitary potential of any smooth relatively compact domain; see \cite{FogagnoloMazzieri}.

Throughout the paper, principal curvatures are computed with respect to the outward normal and are positive on Euclidean spheres and on the boundaries of geodesic balls. If $A$ is the corresponding shape operator, then the mean curvature is $H=\tr A$ and the Gauss--Kronecker curvature is $K=\det A$.

\subsection{Weighted ray inequality}

The following elementary lemma is the analytic core of the Cartan--Hadamard normal-ray argument.

\begin{lemma}\label{lem:weighted-ray}
Let $p>1$ and let $J:(0,\infty)\to(0,\infty)$ be measurable. Assume that
\[
 \int_0^\infty J(s)^{-1/(p-1)}\,ds<\infty.
\]
If $f$ is absolutely continuous on compact intervals, $f(0)\ge1$, and $f(s)\to0$ as $s\to\infty$, then
\begin{equation}\label{eq:weighted-ray}
 \int_0^\infty |f'(s)|^pJ(s)\,ds
 \ge
 \left(\int_0^\infty J(s)^{-1/(p-1)}\,ds\right)^{1-p}.
\end{equation}
Equality is attained by
\begin{equation}\label{eq:weighted-extremal}
 f(s)
 =\frac{\displaystyle\int_s^\infty J(\tau)^{-1/(p-1)}\,d\tau}
 {\displaystyle\int_0^\infty J(\tau)^{-1/(p-1)}\,d\tau}.
\end{equation}
\end{lemma}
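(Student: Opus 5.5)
The plan is a direct Hölder estimate on the half-line. Write $I:=\int_0^\infty J^{-1/(p-1)}\,ds<\infty$. If the left-hand side of \eqref{eq:weighted-ray} is infinite there is nothing to prove, so we assume it is finite.

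First, we use the boundary behaviour of $f$. Since $f$ is absolutely continuous on each $[0,T]$, we have $f(0)-f(T)=-\int_0^T f'(s)\,ds$. Letting $T\to\infty$ and using $f(T)\to0$ and $f(0)\ge1$ gives
\[
1\le \liminf_{T\to\infty}\int_0^T |f'(s)|\,ds=\int_0^\infty|f'(s)|\,ds .
\]

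Next, we split the integrand as $|f'|=\bigl(|f'|J^{1/p}\bigr)\,J^{-1/p}$ and apply Hölder's inequality with exponents $p$ and $p'=p/(p-1)$. Since $J^{-p'/p}=J^{-1/(p-1)}$, this yields
\[
1\le\int_0^\infty|f'|\,ds\le\left(\int_0^\infty|f'|^pJ\,ds\right)^{1/p}I^{(p-1)/p}.
\]
Raising both sides to the power $p$ and dividing by $I^{p-1}$ (note $0<I<\infty$ because $J>0$) gives exactly \eqref{eq:weighted-ray}.

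Finally, we check equality for the extremal \eqref{eq:weighted-extremal}. This $f$ is absolutely continuous, since it is an integral of an $L^1$ function. It satisfies $f(0)=1$, and $f(s)\to0$ by dominated convergence. Its derivative is $f'=-J^{-1/(p-1)}/I$ almost everywhere. Hence
\[
|f'|^pJ=J^{-p/(p-1)+1}/I^p=J^{-1/(p-1)}/I^p,
\]
and integrating gives $I^{1-p}$, so equality holds.

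There is no real obstacle; the one point needing care is the limiting step $T\to\infty$ in the first estimate, which only requires $f(T)\to0$ and involves no integrability assumption on $f'$.
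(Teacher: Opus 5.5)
Your proposal is correct and follows essentially the same route as the paper: the fundamental theorem of calculus gives $1\le\int_0^\infty|f'|\,ds$, then H\"older's inequality applied to $|f'|J^{1/p}$ and $J^{-1/p}$, raised to the power $p$, and direct substitution verifies the extremal. You also supply details the paper leaves implicit: the limit $T\to\infty$, the fact that $I>0$, and the absolute continuity and decay of the extremal.
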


\begin{proof}
Since $f(0)\ge1$ and $f(\infty)=0$, the fundamental theorem of calculus gives
\[
 1\le\int_0^\infty |f'(s)|\,ds.
\]
Applying H\"older's inequality to $|f'|J^{1/p}$ and $J^{-1/p}$, we obtain
\[
 1\le
 \left(\int_0^\infty |f'|^pJ\,ds\right)^{1/p}
 \left(\int_0^\infty J^{-1/(p-1)}\,ds\right)^{(p-1)/p}.
\]
Raising both sides to the power $p$ proves \eqref{eq:weighted-ray}. Direct differentiation and substitution show that \eqref{eq:weighted-extremal} gives equality.
\end{proof}

\section{The Euclidean mixed-curvature estimate}\label{sec:euclidean}

Throughout this section, let $\Omega\subset\R^n$ be smooth, bounded, and strictly convex, let $1<p<n$, and let $u$ solve
\begin{equation}\label{eq:euclidean-potential}
 \begin{cases}
 \operatorname{div}\bigl(|\nabla u|^{p-2}\nabla u\bigr)=0
   & \text{in }\R^n\setminus\overline\Omega,\\
 u=1 & \text{on }\partial\Omega,\\
 u(x)\to0 & \text{as }|x|\to\infty.
 \end{cases}
\end{equation}
The convexity and regularity of the level hypersurfaces are classical consequences of Lewis' work \cite{Lewis}.

Define
\begin{equation}\label{eq:intro-lambda}
 \lambda
 :=\min_{\partial\Omega}
 \left(|\nabla u|^{\,n+1-2p}K\right)^{1/(n-1)}\ge0,
\end{equation}
where $K$ is the Gauss--Kronecker curvature of $\partial\Omega$ with respect to the outward normal. We allow $\lambda=0$; in that case the lower endpoint estimate below is trivial and the upper one is understood in the limiting extended sense $\lambda\downarrow0$.

\subsection{Level-set identities and propagation of the mixed curvature quantity}

For $0<t\le1$, set $\Omega_t=\{u\ge t\}$ and $\Sigma_t=\{u=t\}$. The outward unit normal to $\Omega_t$ is
\[
 \nu=-\frac{\nabla u}{|\nabla u|}.
\]
Let $H_t$ and $K_t$ denote respectively the mean curvature and the Gauss--Kronecker curvature of $\Sigma_t$ with respect to this normal.

\begin{lemma}\label{lem:level-identities}
For every regular value $t\in(0,1)$,
\begin{equation}\label{eq:level-identities}
 \Cap_p(\Omega)=\int_{\Sigma_t}|\nabla u|^{p-1}\,dA,
 \qquad
 -\frac{d}{dt}|\Sigma_t|
 =\int_{\Sigma_t}\frac{H_t}{|\nabla u|}\,dA.
\end{equation}
The second identity holds for almost every $t\in(0,1)$.
\end{lemma}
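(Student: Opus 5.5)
The plan is to prove the two identities separately. Both rest on the divergence theorem and the coarea formula applied to the exterior potential $u$ of \eqref{eq:euclidean-potential}.

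\textbf{Flux identity.} This is \eqref{eq:general-flux} specialized to $\R^n$. I will still sketch it to fix signs.

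First, Lewis' results and strict convexity of $\Omega$ give $|\nabla u|>0$ in $\R^n\setminus\overline\Omega$. Hence every $t\in(0,1)$ is a regular value, and each $\Sigma_t$ is a smooth compact convex hypersurface.

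Next, for $0<s<t<1$ I integrate $\operatorname{div}(|\nabla u|^{p-2}\nabla u)=0$ over the shell $\{s<u<t\}$. On $\Sigma_t$ and $\Sigma_s$ the normal is $\mp\nabla u/|\nabla u|$, so the boundary terms are $\pm\int|\nabla u|^{p-1}$. This shows that $F(t):=\int_{\Sigma_t}|\nabla u|^{p-1}\,dA$ is independent of $t$.

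To identify the constant, I use the coarea formula:
\[
\int_{\{s<u<1\}}|\nabla u|^p\,dx=\int_s^1\int_{\Sigma_\tau}|\nabla u|^{p-1}\,dA\,d\tau=(1-s)F.
\]
Letting $s\downarrow0$ and invoking \eqref{eq:energy-capacity} gives $F=\Cap_p(\Omega)$. Finiteness of the energy is guaranteed because $1<p<n$: the potential decays like $|x|^{(p-n)/(p-1)}$. So no boundary term at infinity needs separate control.

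\textbf{Area variation.} I use the first variation of area for the family $\Sigma_t$. Moving from $\Sigma_t$ to $\Sigma_{t+\delta}$ displaces points along $\nabla u/|\nabla u|=-\nu$ with normal speed $1/|\nabla u|$ per unit increase of $t$. Since the first variation of area under normal speed $\phi$ in direction $\nu$ is $\int H\phi$, this gives
\[
\frac{d}{dt}|\Sigma_t|=-\int_{\Sigma_t}\frac{H_t}{|\nabla u|}\,dA.
\]
To make this rigorous without a global flow, I apply the divergence theorem to the smooth vector field $X=\nabla u/|\nabla u|$ on the shell $\{t<u<t+\delta\}$. Its boundary flux is $|\Sigma_{t+\delta}|-|\Sigma_t|$. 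Its divergence is $\operatorname{div}(\nabla u/|\nabla u|)=-H$, where $H$ is the mean curvature of the level set through the point with respect to $\nu$; this is the standard sign, since $\operatorname{div}\nu=H$. The coarea formula then gives
\[
|\Sigma_{t+\delta}|-|\Sigma_t|=-\int_t^{t+\delta}\int_{\Sigma_\tau}\frac{H_\tau}{|\nabla u|}\,dA\,d\tau.
\]
So $|\Sigma_t|$ is absolutely continuous. The Lebesgue differentiation theorem yields the identity for a.e. $t$, which is all the statement claims. Because $|\nabla u|$ is bounded below on compact subsets, the inner integral is in fact continuous in $t$, and the identity holds for every $t$; but a.e. suffices.

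\textbf{Main obstacle.} The only delicate point is regularity: the $p$-Laplacian is degenerate, and $u$ is smooth only where $\nabla u\neq0$. I would quote Lewis \cite{Lewis} for $|\nabla u|>0$ and real-analyticity of $u$ in the exterior of a convex domain. Granting that, every step above is classical, and the sign conventions are the only thing to check carefully.
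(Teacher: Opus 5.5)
Your proposal is correct and follows essentially the paper's route. The flux identity comes from the divergence theorem on shells between level sets. The paper identifies the constant with the energy by testing the equation with $u$ and passing to infinity, while you use the coarea formula, which is equally valid. The area identity is the first variation with normal speed $-|\nabla u|^{-1}$, which you usefully make rigorous by integrating $\operatorname{div}(\nabla u/|\nabla u|)=-H$ over shells.

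There is one cosmetic slip: on the inner boundary $\Sigma_t$ of the shell $\{s<u<t\}$, the outward normal is $+\nabla u/|\nabla u|$, not $-\nabla u/|\nabla u|$. Your boundary-term signs and your conclusion are nevertheless right.
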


\begin{proof}
The first identity is the Euclidean case of \eqref{eq:general-flux}. For the second one, let $x(t)$ be a local parametrization of the level hypersurfaces. Differentiating $u(x(t))=t$ gives $\nabla u\cdot x'(t)=1$. Since $\nabla u=-|\nabla u|\nu$, the normal velocity with respect to $t$ is $-|\nabla u|^{-1}$. The first variation formula for area therefore gives the second identity in \eqref{eq:level-identities}.
\end{proof}

The exponent $n+1-2p$ in $|\nabla u|^{n+1-2p}K$ is not chosen ad hoc; it is precisely the exponent appearing in the height-concavity theorem of Ma and Zhang \cite{MaZhang}. The following propagation statement is a direct exterior consequence of their theorem; we include the short argument to record the normalization and the passage to infinity.

\begin{lemma}\label{lem:propagation}
Let $\lambda$ be defined by \eqref{eq:intro-lambda}. Then for every $0<t<1$,
\begin{equation}\label{eq:mixed-propagation}
 |\nabla u|^{\,n+1-2p}K_t\ge(\lambda t)^{n-1}
 \qquad\text{on }\Sigma_t.
\end{equation}
\end{lemma}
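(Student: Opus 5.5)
The plan is to deduce \eqref{eq:mixed-propagation} from a concavity-in-height property of the normalized curvature quantity, combined with a limiting argument at infinity. Set
\[
 h(x):=\bigl(|\nabla u|^{\,n+1-2p}K_{u(x)}(x)\bigr)^{1/(n-1)},\qquad
 m(t):=\min_{\Sigma_t}h,\quad 0<t\le1.
\]
By Lewis' results \cite{Lewis}, $|\nabla u|>0$ in $\R^n\setminus\overline\Omega$ and every $\Sigma_t$ is a smooth strictly convex hypersurface. Hence $h$ is smooth and positive there, every $t\in(0,1)$ is regular, and $m(1)=\lambda$ by \eqref{eq:intro-lambda}. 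The claim \eqref{eq:mixed-propagation} is exactly $m(t)\ge\lambda t$.

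\textbf{Step 1: concavity of $m$ via Ma--Zhang.} I would apply the height-concavity theorem of Ma and Zhang \cite{MaZhang} to the $p$-harmonic function $u$ on the bounded convex rings $\Omega\setminus\overline{\Omega_s}$ for small $s>0$. Their theorem asserts that $h$, viewed as a function of the height $t=u$, is concave; in minimum form, $t\mapsto m(t)$ is concave on $[s,1]$. Since $s$ is arbitrary, $m$ is concave on $(0,1]$.

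The point needing care is the normalization. I must check that the exponent $n+1-2p$ and the $(n-1)$-st root match their statement exactly. I also must check that their result applies with the inner boundary $\partial\Omega$ carrying $u=1$ and the outer level set carrying $u=s$. This bookkeeping is the main obstacle. If their theorem is only formulated for interior rings, I would apply it on $\{s<u<1\}$ with both boundaries being level sets, which is harmless.

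\textbf{Step 2: behaviour at infinity.} I need $\liminf_{t\to0^+}m(t)\ge0$. This is immediate because $h>0$, so the limit of the concave function $m$ at $0$ exists and is nonnegative. As a consistency check, the standard asymptotics give $u\sim c|x|^{(p-n)/(p-1)}$, $|\nabla u|\sim\frac{n-p}{p-1}\,u/|x|$ and $K_t\sim|x|^{1-n}$. These yield
\[
 h^{n-1}\sim c'|x|^{2p-2n}u^{n+1-2p}\sim c''\,u^{n-1},
\]
so in fact $m(t)\to0$ linearly in $t$, consistent with the claimed bound.

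\textbf{Step 3: conclusion.} Extend $m$ to $[0,1]$ by $m(0):=\lim_{t\to0^+}m(t)\ge0$; the extension is still concave. For $t\in(0,1)$, concavity gives
\[
 m(t)\ge(1-t)\,m(0)+t\,m(1)\ge t\lambda .
\]
Raising to the power $n-1$ gives $|\nabla u|^{n+1-2p}K_t\ge(\lambda t)^{n-1}$ on $\Sigma_t$. When $\lambda=0$ the statement is trivial.
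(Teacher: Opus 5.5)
Your proposal is correct and follows essentially the paper's argument. You apply the Ma--Zhang height-concavity on the truncated rings between $\partial\Omega$ and $\Sigma_s$; the affine normalization only rescales the quantity by a positive constant. You then use nonnegativity of the quantity at the outer level and compare with the chord ending at $m(1)=\lambda$. The only cosmetic difference is that you first build a concave extension of $m$ to $[0,1]$, whereas the paper applies the chord inequality on each $[\varepsilon,1]$ and then lets $\varepsilon\downarrow0$.
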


\begin{proof}
If $\lambda=0$, the assertion reduces to the nonnegativity of the Gauss--Kronecker curvature of the convex level hypersurfaces and is immediate. We may therefore assume $\lambda>0$. Fix $\varepsilon\in(0,t)$ and consider the convex ring bounded by $\partial\Omega$ and $\Sigma_\varepsilon$. Normalize the potential by
\[
 v_\varepsilon=\frac{u-\varepsilon}{1-\varepsilon}.
\]
Thus $v_\varepsilon=1$ on $\partial\Omega$ and $v_\varepsilon=0$ on $\Sigma_\varepsilon$. Ma and Zhang prove that
\[
 q_\varepsilon(a)
 :=\min_{\{v_\varepsilon=a\}}
 \left(|\nabla v_\varepsilon|^{\,n+1-2p}K\right)^{1/(n-1)}
\]
is concave for interior height values. By the boundary regularity of the capacitary potential and the continuity of the curvature quantity, $q_\varepsilon$ extends continuously to $[0,1]$, so the same concavity inequality holds up to the endpoints. The hypersurface $\Sigma_t$ is the level $v_\varepsilon=a_t$ with $a_t=(t-\varepsilon)/(1-\varepsilon)$. Concavity places the graph above the chord joining $(0,q_\varepsilon(0))$ and $(1,q_\varepsilon(1))$, hence
\[
 q_\varepsilon(a_t)
 \ge (1-a_t)q_\varepsilon(0)+a_tq_\varepsilon(1)
 \ge a_tq_\varepsilon(1),
\]
because $q_\varepsilon(0)\ge0$. Therefore, on $\Sigma_t$,
\[
 \left(|\nabla v_\varepsilon|^{\,n+1-2p}K_t\right)^{1/(n-1)}
 \ge
 \frac{t-\varepsilon}{1-\varepsilon}
 \min_{\partial\Omega}
 \left(|\nabla v_\varepsilon|^{\,n+1-2p}K\right)^{1/(n-1)}.
\]
Since $|\nabla v_\varepsilon|=|\nabla u|/(1-\varepsilon)$, the common scaling factor cancels. Letting $\varepsilon\downarrow0$ proves \eqref{eq:mixed-propagation}.
\end{proof}

\subsection{Mixed curvature--field monotonicity and the critical exponent}

The next theorem gives the mixed curvature--field monotonicity that drives the Euclidean upper comparison.

\begin{theorem}\label{thm:mixed-euclidean}
Let $\Omega\subset\mathbb R^n$ be smooth, bounded, and strictly convex, and let $1<p<n$. The function
\begin{equation}\label{eq:intro-monotone}
 t\longmapsto
 \left(\frac{\Cap_p(\Omega)}{|\Sigma_t|}\right)^{
 \frac{2(n-p)}{(n-1)(p-1)}}
 -\frac{n-p}{p-1}\lambda t^2
\end{equation}
is nondecreasing on $(0,1]$. Consequently,
\begin{equation}\label{eq:intro-euclidean-bounds}
 |\partial\Omega|
 \left(\frac{n-p}{p-1}\lambda\right)^{
 \frac{(n-1)(p-1)}{2(n-p)}}
 \le \Cap_p(\Omega)
 \le |\mathbb S^{n-1}|
 \left(\frac{n-p}{(p-1)\lambda}\right)^{\frac{n-1}{2}}.
\end{equation}
If $n\ge3$, equality in the lower estimate holds if and only if $\Omega$ is a Euclidean ball. Euclidean balls attain all the displayed constants.
\end{theorem}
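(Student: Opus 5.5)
The plan is to differentiate the quantity in \eqref{eq:intro-monotone} along the capacitary level sets. Lemma~\ref{lem:level-identities} supplies the derivative of the area, Lemma~\ref{lem:propagation} bounds the mean curvature from below, and a Jensen-type inequality converts the resulting integral back into the flux $\Cap_p(\Omega)$.

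\textbf{Setup.} Write $A(t)=|\Sigma_t|$, $C=\Cap_p(\Omega)$, $\kappa=\frac{n-p}{p-1}$, $\beta=\frac{2(n-p)}{n-1}$, $\alpha=\frac{\beta}{p-1}=\frac{2(n-p)}{(n-1)(p-1)}$, and $F(t)=(C/A(t))^\alpha$. By Lewis' results, $|\nabla u|>0$ in the exterior and every $\Sigma_t$ is a smooth strictly convex hypersurface. Hence $A$ is $C^1$ on $(0,1]$, and the second identity of Lemma~\ref{lem:level-identities} holds for every $t$. Then
\[
 F'(t)=\alpha F(t)\,\frac{1}{A}\int_{\Sigma_t}\frac{H_t}{|\nabla u|}\,dA .
\]

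\textbf{Mean-curvature bound.} By the arithmetic--geometric mean inequality, $H_t\ge (n-1)K_t^{1/(n-1)}$. By Lemma~\ref{lem:propagation}, $K_t^{1/(n-1)}\ge \lambda t\,|\nabla u|^{-(n+1-2p)/(n-1)}$. Combining the two gives
\[
 \frac{H_t}{|\nabla u|}\ge (n-1)\lambda t\,|\nabla u|^{-\beta}.
\]

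\textbf{Jensen step.} The function $x\mapsto x^{-\beta/(p-1)}$ is convex, so Jensen's inequality for the probability measure $dA/A(t)$ on $\Sigma_t$, applied to $x=|\nabla u|^{p-1}$ together with the flux identity $\int_{\Sigma_t}|\nabla u|^{p-1}=C$, yields
\[
 \int_{\Sigma_t}|\nabla u|^{-\beta}\,dA\ge A\,(A/C)^{\alpha}.
\]
Substituting into the formula for $F'$ gives
\[
 F'(t)\ge \alpha(n-1)\lambda t=2\kappa\lambda t,
\]
which is exactly the monotonicity of $F(t)-\kappa\lambda t^2$.

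\textbf{The two bounds.} Since $A(t)\to\infty$ as $t\to0$, we have $F(0^+)=0$, and monotonicity gives $F(1)\ge\kappa\lambda$. This is the lower estimate in \eqref{eq:intro-euclidean-bounds}.

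For the upper estimate, integrating $F'\ge 2\kappa\lambda t$ from $0$ gives $F(t)\ge\kappa\lambda t^2$, and I let $t\to0$ using the precise asymptotics of the exterior potential. By Kichenassamy--V\'eron, $u(x)\sim \bigl(C/(|\mathbb S^{n-1}|\kappa^{p-1})\bigr)^{1/(p-1)}|x|^{-\kappa}$, with the corresponding gradient asymptotics, so $\Sigma_t$ is asymptotically a sphere of radius $r_t$. Here $t\sim c\,r_t^{-\kappa}$ and $A(t)\sim|\mathbb S^{n-1}|r_t^{n-1}$, which gives
\[
 \lim_{t\to0}\frac{F(t)}{t^2}=\kappa^2\,|\mathbb S^{n-1}|^{2/(n-1)}\,C^{-2/(n-1)}.
\]
The power of $C$ arises as $\alpha-\tfrac{2}{p-1}=-\tfrac{2}{n-1}$. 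The inequality $\lim F/t^2\ge\kappa\lambda$ then rearranges to
\[
 C\le|\mathbb S^{n-1}|\,(\kappa/\lambda)^{(n-1)/2}.
\]
The case $\lambda=0$ is read in the limiting sense, as stated before the theorem.

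\textbf{Equality and the main obstacle.} If equality holds in the lower bound, then $F(t)-\kappa\lambda t^2$ is constant, so every inequality above is an equality for a.e.\ $t$. In particular, equality in the arithmetic--geometric mean step makes the level sets umbilic. By continuity this extends to $\partial\Omega$, and for $n\ge3$ a closed connected umbilic hypersurface is a round sphere, so $\Omega$ is a ball. Conversely, a direct computation with the radial potential of \eqref{eq:intro-ball-capacity} shows that balls attain both constants. The step I expect to be most delicate is the asymptotic limit $F(t)/t^2$: it requires uniform convergence of the rescaled level sets and of their areas, not merely pointwise asymptotics of $u$. I would first attempt it via the $C^1$ asymptotics of $|x|^{\kappa}u$ and of $\nabla u$ at infinity, which yield $A(t)$ to leading order.
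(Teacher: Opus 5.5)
Your proposal is correct and follows essentially the same route as the paper. Both arguments combine AM--GM with the propagation lemma, then apply Jensen against the flux identity to obtain $F'\ge 2\kappa\lambda t$. Both use the zero limit at $t=0$ for the lower bound and the far-field asymptotics for the upper bound.

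The differences are minor. For the step you flagged as delicate, the paper avoids convergence of rescaled level sets: it inserts the uniform gradient asymptotic into the flux identity $\Cap_p(\Omega)=\int_{\Sigma_t}|\nabla u|^{p-1}\,dA$, which directly gives the leading-order area. For rigidity, the paper uses Codazzi to show each level set is a sphere and then takes a Hausdorff limit of nested balls, whereas you pass umbilicity to $\partial\Omega$ by continuity; both work.
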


\begin{proof}[Proof of Theorem~\ref{thm:mixed-euclidean}]
Taking the $(n-1)$-st root in Lemma~\ref{lem:propagation} and using the arithmetic--geometric mean inequality gives
\[
 H_t\ge(n-1)K_t^{1/(n-1)}
 \ge(n-1)\lambda t\,|\nabla u|^{(2p-n-1)/(n-1)}.
\]
Together with the first-variation identity in \eqref{eq:level-identities}, this yields the integral appearing below. The function
\[
 s\longmapsto s^{-\frac{2(n-p)}{(n-1)(p-1)}}
\]
is convex on $(0,\infty)$. Jensen's inequality, applied to $|\nabla u|^{p-1}$ on $\Sigma_t$, together with the flux identity, yields
\begin{align}
 \int_{\Sigma_t}|\nabla u|^{-2(n-p)/(n-1)}\,dA
 &=\int_{\Sigma_t}
 \bigl(|\nabla u|^{p-1}\bigr)^{-\frac{2(n-p)}{(n-1)(p-1)}}\,dA\notag\\
 &\ge |\Sigma_t|^{1+\frac{2(n-p)}{(n-1)(p-1)}}
 \left(\int_{\Sigma_t}|\nabla u|^{p-1}\,dA\right)^{-\frac{2(n-p)}{(n-1)(p-1)}}\notag\\
 &=|\Sigma_t|^{1+\frac{2(n-p)}{(n-1)(p-1)}}
 \Cap_p(\Omega)^{-\frac{2(n-p)}{(n-1)(p-1)}}.
 \label{eq:jensen-direct}
\end{align}
Combining this Jensen estimate with the preceding curvature bound and the first-variation identity gives
\begin{equation}\label{eq:area-riccati}
 -\frac{d}{dt}|\Sigma_t|
 \ge(n-1)\lambda t\,
 \Cap_p(\Omega)^{-\frac{2(n-p)}{(n-1)(p-1)}}
 |\Sigma_t|^{1+\frac{2(n-p)}{(n-1)(p-1)}}.
\end{equation}
Consequently,
\begin{align*}
 &\frac{d}{dt}\left[
 \left(\frac{\Cap_p(\Omega)}{|\Sigma_t|}\right)^{
 \frac{2(n-p)}{(n-1)(p-1)}}
 -\frac{n-p}{p-1}\lambda t^2
 \right]\\
 &\quad=-\frac{2(n-p)}{(n-1)(p-1)}
 \Cap_p(\Omega)^{\frac{2(n-p)}{(n-1)(p-1)}}
 |\Sigma_t|^{-1-\frac{2(n-p)}{(n-1)(p-1)}}
 \frac{d}{dt}|\Sigma_t|
 -\frac{2(n-p)}{p-1}\lambda t\\
 &\quad\ge0.
\end{align*}
This proves the monotonicity in \eqref{eq:intro-monotone}.

Since $|\Sigma_t|\to\infty$ as $t\downarrow0$, the function in \eqref{eq:intro-monotone} tends to zero as $t\downarrow0$. Evaluating its monotonicity at $t=1$ gives
\[
 \left(\frac{\Cap_p(\Omega)}{|\partial\Omega|}\right)^{
 \frac{2(n-p)}{(n-1)(p-1)}}
 \ge\frac{n-p}{p-1}\lambda.
\]
This is the lower bound in \eqref{eq:intro-euclidean-bounds}.

For the upper bound, we use the standard far-field asymptotics of the exterior $p$-capacitary potential and its gradient. In the present unnormalized convention, Lemma~2.15 of Colesanti--Nystr\"om--Salani--Xiao--Yang--Zhang \cite{ColesantiNystromSalaniXiaoYangZhang} (see also \cite[Proposition~2.4]{FogagnoloMazzieriPinamonti}) gives
\begin{equation}\label{eq:far-field}
\begin{aligned}
 u(x)
 &=A|x|^{-\frac{n-p}{p-1}}+o\left(|x|^{-\frac{n-p}{p-1}}\right),\\
 |\nabla u(x)|
 &=\frac{n-p}{p-1}A|x|^{-\frac{n-1}{p-1}}
   +o\left(|x|^{-\frac{n-1}{p-1}}\right).
\end{aligned}
\end{equation}
where
\[
 A=\frac{p-1}{n-p}
 \left(\frac{\Cap_p(\Omega)}{|\Sph^{n-1}|}\right)^{1/(p-1)}.
\]
Both limits are uniform as $|x|\to\infty$. On $\Sigma_t$, the first relation gives $|x|=(A/t)^{\frac{p-1}{n-p}}(1+o(1))$, while the second relation and the flux identity
$\Cap_p(\Omega)=\int_{\Sigma_t}|\nabla u|^{p-1}dA$ determine the first-order area asymptotic. Consequently,
\begin{equation}\label{eq:direct-asymptotic}
 \lim_{t\downarrow0}\frac1{t^2}
 \left(\frac{\Cap_p(\Omega)}{|\Sigma_t|}\right)^{
 \frac{2(n-p)}{(n-1)(p-1)}}
 =\left(\frac{n-p}{p-1}\right)^2
 \left(\frac{|\Sph^{n-1}|}{\Cap_p(\Omega)}\right)^{2/(n-1)}.
\end{equation}
On the other hand, the monotonicity in \eqref{eq:intro-monotone} and its zero limit at $t=0$ give
\[
 \left(\frac{\Cap_p(\Omega)}{|\Sigma_t|}\right)^{
 \frac{2(n-p)}{(n-1)(p-1)}}
 \ge\frac{n-p}{p-1}\lambda t^2.
\]
Comparing with \eqref{eq:direct-asymptotic}, we find
\[
 \left(\frac{n-p}{p-1}\right)^2
 \left(\frac{|\Sph^{n-1}|}{\Cap_p(\Omega)}\right)^{2/(n-1)}
 \ge\frac{n-p}{p-1}\lambda,
\]
which is exactly the upper bound in \eqref{eq:intro-euclidean-bounds}.

For a ball $B(R)$, the potential is $u(x)=(R/|x|)^{(n-p)/(p-1)}$, so
\[
 |\nabla u|\big|_{\partial B(R)}=\frac{n-p}{(p-1)R},
 \qquad K_{\partial B(R)}=R^{-(n-1)}.
\]
Substitution into \eqref{eq:intro-monotone} and \eqref{eq:intro-euclidean-bounds} shows that all the stated constants are attained by balls.
\end{proof}

We also record the resulting mixed-curvature area inequality.

\begin{remark}\label{rem:mixed-area}
Comparing the lower and upper bounds in \eqref{eq:intro-euclidean-bounds} eliminates the capacity and yields
\begin{equation}\label{eq:mixed-area-consequence}
 |\partial\Omega|\,
 \lambda^{\frac{(n-1)^2}{2(n-p)}}
 \le
 |\mathbb S^{n-1}|
 \left(\frac{n-p}{p-1}\right)^{
 \frac{(n-1)(n+1-2p)}{2(n-p)}}.
\end{equation}
At the critical exponent $p=\frac{n+1}{2}$, this reduces to
\[
 |\partial\Omega|\min_{\partial\Omega}K
 \le |\mathbb S^{n-1}|,
\]
which also follows immediately from the Gauss-map area formula $d\omega=K\,dA$. Indeed, the Gauss map $\nu:\partial\Omega\to\mathbb S^{n-1}$ has degree one for a smooth convex body, so the area formula gives $\int_{\partial\Omega}K\,dA=|\mathbb S^{n-1}|$; see, for example, \cite[Chapter~8]{Schneider}.
\end{remark}

\begin{proof}[Proof of the rigidity statement in Theorem~\ref{thm:mixed-euclidean}]
The converse follows from the ball computation above. Suppose that equality holds in the lower estimate. Since $\Cap_p(\Omega)>0$, equality forces $\lambda>0$. The function in \eqref{eq:intro-monotone} is nondecreasing, tends to zero as $t\downarrow0$, and equals zero at $t=1$. It is therefore identically zero. Hence, for almost every regular value $t$, equality holds in both the arithmetic--geometric mean step and the Jensen step used in the proof.

Equality in
\[
 H_t\ge(n-1)K_t^{1/(n-1)}
\]
forces the shape operator of $\Sigma_t$ to have the form $A_t=\mu_t\Id$. Since $n\ge3$, the hypersurface dimension is at least two; the Codazzi equation then gives $\nabla_{\Sigma_t}\mu_t=0$. Thus $\mu_t$ is constant on each connected regular level, and a closed connected totally umbilical hypersurface in Euclidean space is a round sphere. Equality in Jensen's inequality also forces $|\nabla u|$ to be constant on $\Sigma_t$.

Choose regular values $t_j\uparrow1$ from the full-measure set on which these equalities hold. Then each convex superlevel set $\Omega_{t_j}$ is a Euclidean ball. The sets are nested and, by continuity of $u$, converge in Hausdorff distance to $\overline\Omega$ as $t_j\uparrow1$. A Hausdorff limit of a nested sequence of Euclidean balls with nonempty limiting interior is again a Euclidean ball. Hence $\Omega$ is a ball.
\end{proof}

The same monotonicity also gives pointwise control of the area of every level hypersurface.

\begin{corollary}\label{cor:level-area}
Under the assumptions of Theorem~\ref{thm:mixed-euclidean}, for every $0<t\le1$, with the right-hand side interpreted as $+\infty$ when $\lambda=0$,
\begin{equation}\label{eq:level-area}
\resizebox{0.96\textwidth}{!}{$\displaystyle
 \frac{\Cap_p(\Omega)}{
 \left[
 \left(\frac{\Cap_p(\Omega)}{|\partial\Omega|}\right)^{\frac{2(n-p)}{(n-1)(p-1)}}
 -\frac{n-p}{p-1}\lambda(1-t^2)
 \right]^{\frac{(n-1)(p-1)}{2(n-p)}}}
 \le |\Sigma_t|\le
 \frac{\Cap_p(\Omega)}{
 \left(\frac{n-p}{p-1}\lambda\right)^{\frac{(n-1)(p-1)}{2(n-p)}}
 t^{\frac{(n-1)(p-1)}{n-p}}}. $}
\end{equation}
\end{corollary}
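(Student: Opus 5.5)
Set
\[
 \alpha:=\frac{2(n-p)}{(n-1)(p-1)},\qquad
 F(t):=\left(\frac{\Cap_p(\Omega)}{|\Sigma_t|}\right)^{\alpha}-\frac{n-p}{p-1}\lambda t^2 ,
\]
the function shown to be nondecreasing on $(0,1]$ in Theorem~\ref{thm:mixed-euclidean}. Both inequalities in \eqref{eq:level-area} come from this monotonicity, applied once on the interval $[t,1]$ and once on $(0,t]$. In each case we solve the resulting inequality for $|\Sigma_t|$, using that $x\mapsto x^{-1/\alpha}$ is decreasing on $(0,\infty)$. The exponent $\frac{(n-1)(p-1)}{2(n-p)}$ appearing in \eqref{eq:level-area} is exactly $1/\alpha$.

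\emph{Lower bound.} Monotonicity gives $F(t)\le F(1)$, that is,
\[
 \left(\frac{\Cap_p(\Omega)}{|\Sigma_t|}\right)^{\alpha}
 \le\left(\frac{\Cap_p(\Omega)}{|\partial\Omega|}\right)^{\alpha}
 -\frac{n-p}{p-1}\lambda(1-t^2).
\]
The left-hand side is positive, so the bracket on the right is positive as well. Raising both sides to the power $1/\alpha$ and rearranging gives the left inequality of \eqref{eq:level-area}. At $t=1$ it reduces to the identity $|\Sigma_1|=|\partial\Omega|$.

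\emph{Upper bound.} The proof of Theorem~\ref{thm:mixed-euclidean} shows that $F(s)\to0$ as $s\downarrow0$, because $|\Sigma_s|\to\infty$. Monotonicity then gives $F(t)\ge\lim_{s\downarrow0}F(s)=0$, so
\[
 \left(\frac{\Cap_p(\Omega)}{|\Sigma_t|}\right)^{\alpha}\ge\frac{n-p}{p-1}\lambda t^2 .
\]
When $\lambda>0$ we invert this to obtain
\[
 |\Sigma_t|\le\Cap_p(\Omega)\left(\frac{n-p}{p-1}\lambda\right)^{-1/\alpha}t^{-2/\alpha}.
\]
Since $2/\alpha=\frac{(n-1)(p-1)}{n-p}$, this is the right inequality of \eqref{eq:level-area}. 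When $\lambda=0$ the right-hand side is read as $+\infty$ and there is nothing to prove.

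\emph{Main obstacle.} The one delicate point is that the monotonicity was derived from a differential inequality valid only at almost every regular value. I would first check that $t\mapsto|\Sigma_t|$ is continuous. This should follow from the convexity and smoothness of the level sets given by Lewis' results: the gradient does not vanish outside $\Omega$ for convex rings, so every $t\in(0,1)$ is a regular value. With continuity in hand, pointwise comparison of $F$ at arbitrary $t$ is justified, and the argument above goes through.
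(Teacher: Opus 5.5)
Your proof is correct and matches the paper's argument. Compare the monotone quantity of Theorem~\ref{thm:mixed-euclidean} at $t$ with its value at $t=1$ for the lower bound and with its zero limit as $t\downarrow0$ for the upper bound, then solve for $|\Sigma_t|$. Your last paragraph is unnecessary, since Theorem~\ref{thm:mixed-euclidean} already asserts monotonicity on all of $(0,1]$. If you do want to re-justify it, continuity of $t\mapsto|\Sigma_t|$ alone would not suffice, because a continuous function with nonnegative derivative almost everywhere need not be monotone. Either of the following does suffice: your observation that every $t\in(0,1)$ is a regular value, so the differential inequality holds at every $t$; or the monotonicity of $|\Sigma_t|$ for nested convex level sets.
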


\begin{proof}
Applying the monotonicity in \eqref{eq:intro-monotone} on $[0,t]$ and $[t,1]$ gives
\[
 \left(\frac{\Cap_p(\Omega)}{|\Sigma_t|}\right)^{\frac{2(n-p)}{(n-1)(p-1)}}
 \ge\frac{n-p}{p-1}\lambda t^2,
\]
and
\[
 \left(\frac{\Cap_p(\Omega)}{|\Sigma_t|}\right)^{\frac{2(n-p)}{(n-1)(p-1)}}
 \le
 \left(\frac{\Cap_p(\Omega)}{|\partial\Omega|}\right)^{\frac{2(n-p)}{(n-1)(p-1)}}
 -\frac{n-p}{p-1}\lambda(1-t^2).
\]
Solving for $|\Sigma_t|$ proves \eqref{eq:level-area}.
\end{proof}

The critical exponent $p=\frac{n+1}{2}$ is distinguished by the fact that $n+1-2p=0$.

\begin{remark}\label{cor:critical}
At $p=\frac{n+1}{2}$, the quantity in \eqref{eq:intro-lambda} becomes $(\min_{\partial\Omega}K)^{1/(n-1)}$. Thus, if
\[
 K\ge K_0>0\qquad\text{on }\partial\Omega,
\]
then $\lambda\ge K_0^{1/(n-1)}$, and \eqref{eq:intro-euclidean-bounds} gives
\begin{equation}\label{eq:critical-bounds}
 |\partial\Omega|K_0^{1/2}
 \le\Cap_{\frac{n+1}{2}}(\Omega)
 \le|\Sph^{n-1}|K_0^{-1/2}.
\end{equation}
For $n=3$, $p=2$, and $K_0=1$, this reads $|\partial\Omega|\le\Cap_2(\Omega)\le4\pi$.
\end{remark}

\section{A Euclidean curvature-integral lower bound}\label{sec:euclidean-ray}

We first apply the normal-ray mechanism to a smooth bounded convex domain $\Omega\subset\mathbb R^n$ with nonnegative principal curvatures. The outward normal map
\[
 (y,s)\longmapsto y+s\nu(y),
 \qquad y\in\partial\Omega,\quad s\ge0,
\]
is a global parametrization of $\mathbb R^n\setminus\Omega$, and its Jacobian is
\begin{equation}\label{eq:euclidean-normal-jacobian}
 J(y,s)=\prod_{i=1}^{n-1}(1+s\kappa_i(y)).
\end{equation}

The following theorem is the Euclidean normal-ray curvature lower bound.

\begin{theorem}\label{thm:euclidean-ray}
Let $\Omega\subset\mathbb R^n$ be smooth, bounded, and strictly convex, let $1<p<n$, and let $\kappa_1,\ldots,\kappa_{n-1}$ be the principal curvatures of $\partial\Omega$. Then
\begin{align}
 \Cap_p(\Omega)
 &\ge
 \int_{\partial\Omega}
 \left[
 \int_0^\infty
 \prod_{i=1}^{n-1}(1+s\kappa_i)^{-1/(p-1)}\,ds
 \right]^{1-p}dA
 \label{eq:intro-euclidean-ray}\\
 &\ge
 \left(\frac{n-p}{p-1}\right)^{p-1}
 \int_{\partial\Omega}K^{\frac{p-1}{n-1}}\,dA.
 \label{eq:intro-euclidean-curvature}
\end{align}
Here, if the inner ray integral is infinite, its power $1-p$ is understood as zero. Equality in either inequality holds if and only if $\Omega$ is a Euclidean ball.
\end{theorem}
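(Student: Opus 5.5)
We prove the first inequality \eqref{eq:intro-euclidean-ray} with the normal-ray mechanism and the weighted ray inequality, Lemma~\ref{lem:weighted-ray}. Let $u$ be the exterior potential \eqref{eq:euclidean-potential} and write the exterior in the normal coordinates $x=y+s\nu(y)$. By \eqref{eq:euclidean-normal-jacobian}, $dV=J(y,s)\,ds\,dA(y)$, so
\[
 \Cap_p(\Omega)=\int_{\partial\Omega}\int_0^\infty|\nabla u(y+s\nu(y))|^pJ(y,s)\,ds\,dA(y).
\]
Fix $y$ and set $f(s)=u(y+s\nu(y))$. Then $|f'(s)|\le|\nabla u|$, $f(0)=1$, and $f(s)\to0$ as $s\to\infty$. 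We apply Lemma~\ref{lem:weighted-ray} with the weight $J(y,\cdot)$. If $\int_0^\infty J^{-1/(p-1)}\,ds=\infty$, the convention in the statement makes the right-hand side zero. Otherwise the lemma bounds the inner integral from below by $[\int_0^\infty J^{-1/(p-1)}]^{1-p}$, and integrating over $\partial\Omega$ gives \eqref{eq:intro-euclidean-ray}. Because the inner integral is bounded below pointwise in $y$, Tonelli suffices and no measurability issues arise.

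For the second inequality \eqref{eq:intro-euclidean-curvature}, fix $y$ and set $k=K(y)^{1/(n-1)}$. By the arithmetic--geometric mean inequality, or equivalently by expanding the product and applying Maclaurin's inequalities to the elementary symmetric functions, we have
\[
 \prod_i(1+s\kappa_i)\ge(1+sk)^{n-1}.
\]
Hence
\[
 \int_0^\infty\prod_i(1+s\kappa_i)^{-1/(p-1)}\,ds\le\int_0^\infty(1+sk)^{-\frac{n-1}{p-1}}\,ds=\frac{p-1}{(n-p)k},
\]
which is finite because $\frac{n-1}{p-1}>1$ when $p<n$. Since $1-p<0$, raising to the power $1-p$ reverses the inequality and gives
\[
 \Bigl[\int_0^\infty\prod_i(1+s\kappa_i)^{-1/(p-1)}\,ds\Bigr]^{1-p}\ge\Bigl(\frac{n-p}{p-1}\Bigr)^{p-1}K^{\frac{p-1}{n-1}}.
\]
Integrating over $\partial\Omega$ yields \eqref{eq:intro-euclidean-curvature}. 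Strict convexity guarantees $k>0$; where $K=0$ both sides are trivially compatible.

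For a ball $B(R)$, the potential is radial and its normal rays are radial lines, so $|f'|=|\nabla u|$. The extremal function \eqref{eq:weighted-extremal} with $J=(1+s/R)^{n-1}$ reproduces $(R/(R+s))^{(n-p)/(p-1)}$, and $\kappa_i\equiv1/R$ makes the AM--GM step an equality. Both bounds therefore equal \eqref{eq:intro-ball-capacity}.

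\textbf{Equality in \eqref{eq:intro-euclidean-curvature}.} Continuity forces $\prod_i(1+s\kappa_i)=(1+sk)^{n-1}$ for all $s$ at every $y$. This means all $\kappa_i(y)$ coincide, so $\partial\Omega$ is totally umbilic. As in the rigidity proof of Theorem~\ref{thm:mixed-euclidean}, the Codazzi equation (using $n\ge3$) shows the umbilic curvature is constant, and a closed connected totally umbilic hypersurface is a round sphere.

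\textbf{Equality in \eqref{eq:intro-euclidean-ray}.} This is the main obstacle. For a.e.\ $y$ there must be equality in $|f'|\le|\nabla u|$, so $\nabla u$ is parallel to $\nu(y)$ along the whole normal ray. There must also be equality in Hölder's inequality in Lemma~\ref{lem:weighted-ray}, so $f$ is the extremal \eqref{eq:weighted-extremal}. Parallelism along every ray means the normal rays are the gradient lines of $u$, so they are orthogonal to each level set $\Sigma_t$. The level sets are then parallel hypersurfaces of $\partial\Omega$, with $\Sigma_t$ at constant normal distance $s(t)$ from $\partial\Omega$.

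We plan to exploit this parallel structure in two ways. First, $|\nabla u|$ is constant on $\partial\Omega$: it equals the distance derivative $1/s'(1)$, independent of $y$. Alternatively, the extremal gives $|f'(0)|=[\int_0^\infty J^{-1/(p-1)}]^{-1}$, and the distance function to $\Sigma_t$ is common to all $y$, which forces $\int_0^\infty J(y,s)^{-1/(p-1)}\,ds$ to be independent of $y$. Second, the level sets $\Sigma_t=\partial\Omega+s\nu$ grow without bound while staying at bounded Hausdorff distance from the spheres $\partial B(s)$, and the far-field asymptotics \eqref{eq:far-field} say the potential is asymptotically radial.

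To conclude, we take the route through the overdetermined problem. Constant $|\nabla u|$ on $\partial\Omega$ for the exterior $p$-capacitary potential is Serrin-type overdetermined data, and the known rigidity for exterior $p$-capacitary problems gives that $\Omega$ is a ball. If we prefer to avoid citing that, we would instead use the fact that the parallel structure forces $J(y,\cdot)^{-1/(p-1)}$ to have $y$-independent integrals for every starting level $t$. Comparing across levels, this should pin down the elementary symmetric functions of the $\kappa_i$ as constants, which again gives a sphere.
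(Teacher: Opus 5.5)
Your proposal is correct. The two inequalities are proved exactly as in the paper: normal coordinates, $|\nabla u|\ge|\partial_s u|$, Lemma~\ref{lem:weighted-ray} on each ray, then Maclaurin's inequality $\prod_i(1+s\kappa_i)\ge(1+sK^{1/(n-1)})^{n-1}$. Using the potential $u$ instead of an arbitrary admissible $\varphi$ is harmless; the paper does the same in its rigidity argument.

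The equality cases are where your route differs. For the first inequality, you and the paper both reach $u=f(s)$ in the global normal coordinates. The paper then substitutes this into the equation: since $\Delta s=H_s$, one gets $(p-1)f''+f'H_s=0$. Because $f'<0$, every parallel hypersurface, and in particular $\partial\Omega$, has constant mean curvature, and Alexandrov's theorem concludes. You keep only the consequence $|\nabla u|\equiv|f'(0)|$ on $\partial\Omega$ and cite Serrin-type rigidity for the exterior $p$-capacitary overdetermined problem (Reichel; Garofalo--Sartori for $1<p<n$). That is valid, but it imports a much heavier moving-plane theorem and discards the structure you already had.

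Your fallback argument is in fact immediate and avoids the PDE entirely. Equality in H\"older on each ray gives $f'(s)=-c(y)J(y,s)^{-1/(p-1)}$, with $c(y)=\bigl[\int_0^\infty J(y,\tau)^{-1/(p-1)}\,d\tau\bigr]^{-1}$. Since $f$ does not depend on $y$ and $J(y,0)=1$, the constant $c$ is independent of $y$. Hence $J(y,s)=\prod_i(1+s\kappa_i(y))$ is independent of $y$ for every $s$, so all elementary symmetric functions of the principal curvatures are constant. Alexandrov's theorem (or Minkowski uniqueness for constant $K$) then gives a ball, so you may drop the word ``should''.

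For the second inequality, you argue directly: pointwise equality in AM--GM, then umbilicity, then Codazzi. The paper instead notes that equality between $\Cap_p(\Omega)$ and the curvature integral forces equality in the first inequality as well, and reuses that case. Your argument is more elementary, but it needs $n\ge3$. For $n=2$ the Maclaurin step is an identity, so there you must go through the first equality case.
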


\begin{proof}[Proof of Theorem~\ref{thm:euclidean-ray}]
Let $\varphi\in C_c^\infty(\mathbb R^n)$ be admissible for $\Cap_p(\Omega)$. In normal coordinates,
\[
 \int_{\mathbb R^n\setminus\Omega}|\nabla\varphi|^p\,dx
 \ge
 \int_{\partial\Omega}\int_0^\infty
 |\partial_s\varphi(y,s)|^pJ(y,s)\,ds\,dA(y).
\]
Applying Lemma~\ref{lem:weighted-ray} along every normal ray and then using \eqref{eq:euclidean-normal-jacobian}, we obtain
\[
 \int_{\mathbb R^n\setminus\Omega}|\nabla\varphi|^p\,dx
 \ge
 \int_{\partial\Omega}
 \left[
 \int_0^\infty
 \prod_{i=1}^{n-1}(1+s\kappa_i)^{-1/(p-1)}\,ds
 \right]^{1-p}dA.
\]
Taking the infimum over admissible $\varphi$ proves \eqref{eq:intro-euclidean-ray}.

For the second inequality, Maclaurin's inequalities give
\[
 \prod_{i=1}^{n-1}(1+s\kappa_i)
 \ge
 \left(1+sK^{1/(n-1)}\right)^{n-1}.
\]
Consequently,
\[
\begin{aligned}
 \int_0^\infty
 \prod_{i=1}^{n-1}(1+s\kappa_i)^{-1/(p-1)}\,ds
 &\le
 \int_0^\infty
 \left(1+sK^{1/(n-1)}\right)^{-\frac{n-1}{p-1}}\,ds\\
 &=\frac{p-1}{n-p}K^{-1/(n-1)}.
\end{aligned}
\]
Because $1-p<0$, substitution into the first inequality gives \eqref{eq:intro-euclidean-curvature}.
\end{proof}

\begin{proof}[Proof of the rigidity statement in Theorem~\ref{thm:euclidean-ray}]
Balls give equality. Conversely, let $u$ be the exterior $p$-capacitary potential and assume equality in \eqref{eq:intro-euclidean-ray}; equality in \eqref{eq:intro-euclidean-curvature} implies equality in \eqref{eq:intro-euclidean-ray} as well, since the proof gives a chain of lower bounds. Because $u=1$ on $\partial\Omega$, $u\to0$ at infinity, and $u$ has finite $p$-energy, Lemma~\ref{lem:weighted-ray} applies directly to the restriction $s\mapsto u(y,s)$ on every outward normal ray. Hence
\[
\begin{aligned}
 \Cap_p(\Omega)
 &=\int_{\mathbb R^n\setminus\Omega}|\nabla u|^p\,dx\\
 &\ge
 \int_{\partial\Omega}\int_0^\infty
 |\partial_su|^pJ\,ds\,dA\\
 &\ge
 \int_{\partial\Omega}
 \left[
 \int_0^\infty J(y,s)^{-1/(p-1)}\,ds
 \right]^{1-p}dA.
\end{aligned}
\]
Equality in \eqref{eq:intro-euclidean-ray} forces equality in both inequalities above. In particular, equality in $|\nabla u|^p\ge|\partial_su|^p$ implies that the tangential gradient of $u$ vanishes almost everywhere; by regularity it vanishes everywhere. Thus, in the global normal coordinates, $u(y,s)=f(s)$. The convex-ring theory of Lewis \cite{Lewis} gives $|\nabla u|>0$ in the exterior, so $f'(s)<0$. The $p$-harmonic equation becomes
\[
 (p-1)f''(s)+f'(s)H_s(y)=0,
\]
where $H_s$ is the mean curvature of the parallel hypersurface at distance $s$. Since $f'(s)\ne0$, $H_s$ is independent of $y$ for every $s$, and in particular $\partial\Omega$ has constant mean curvature. By Alexandrov's theorem \cite{Alexandrov} a closed embedded constant-mean-curvature hypersurface in Euclidean space is a round sphere. Hence $\Omega$ is a ball.
\end{proof}

\subsection{Gauss-curvature comparison and pinching}

\begin{proof}[Proof of Theorem~\ref{thm:intro-gauss-comparison}]
The lower comparison follows directly from Theorem~\ref{thm:euclidean-ray}. Indeed, for every $1<p<n$,
\[
\begin{aligned}
 \Cap_p(\Omega)
 &\ge
 \left(\frac{n-p}{p-1}\right)^{p-1}
 \int_{\partial\Omega}K^{\frac{p-1}{n-1}}\,dA\\
 &=
 \left(\frac{n-p}{p-1}\right)^{p-1}
 \int_{\partial\Omega}K\,K^{-\frac{n-p}{n-1}}\,dA\\
 &\ge
 |\mathbb S^{n-1}|
 \left(\frac{n-p}{p-1}\right)^{p-1}
 K_+^{-\frac{n-p}{n-1}}\\
 &=\Cap_p\!\left(B(K_+^{\frac{1}{1-n}})\right),
\end{aligned}
\]
where we used the Gauss-map identity $\int_{\partial\Omega}K\,dA=|\mathbb S^{n-1}|$.

For the upper comparison, assume $1<p\le\frac{n+1}{2}$ and let $u$ be the exterior $\frac{n+1}{2}$-capacitary potential of $\Omega$. For $\Sigma_t=\{u=t\}$, Theorem~\ref{thm:mixed-euclidean} and Corollary~\ref{cor:level-area}, specialized to the critical exponent, give
\[
 \Cap_{\frac{n+1}{2}}(\Omega)\le |\mathbb S^{n-1}|K_-^{-1/2},
 \qquad
 |\Sigma_t|\le \Cap_{\frac{n+1}{2}}(\Omega)K_-^{-1/2}t^{-(n-1)}.
\]
The critical flux identity and H\"older's inequality therefore yield
\[
\begin{aligned}
 \int_{\Sigma_t}|\nabla u|^{p-1}\,dA
 &\le
 \left(\int_{\Sigma_t}|\nabla u|^{\frac{n-1}{2}}\,dA\right)^{\frac{2(p-1)}{n-1}}
 |\Sigma_t|^{\frac{n+1-2p}{n-1}}\\
 &\le
 \Cap_{\frac{n+1}{2}}(\Omega)
 K_-^{-\frac{n+1-2p}{2(n-1)}}
 t^{-(n+1-2p)}.
\end{aligned}
\]
After the standard cutoff at infinity, $u^{(n-p)/(p-1)}$ is an admissible test function for $\Cap_p(\Omega)$. By the coarea formula,
\[
\begin{aligned}
 \Cap_p(\Omega)
 &\le
 \left(\frac{n-p}{p-1}\right)^p
 \int_0^1
 t^{\frac{p(n+1-2p)}{p-1}}
 \int_{\Sigma_t}|\nabla u|^{p-1}\,dA\,dt\\
 &\le
 \left(\frac{n-p}{p-1}\right)^{p-1}
 \Cap_{\frac{n+1}{2}}(\Omega)
 K_-^{-\frac{n+1-2p}{2(n-1)}}\\
 &\le
 |\mathbb S^{n-1}|
 \left(\frac{n-p}{p-1}\right)^{p-1}
 K_-^{-\frac{n-p}{n-1}}\\
 &=\Cap_p\!\left(B(K_-^{\frac{1}{1-n}})\right).
\end{aligned}
\]
This proves \eqref{eq:intro-gauss-comparison}.

If equality holds in the left-hand comparison, then equality must hold in the chain above, in particular in the normal-ray curvature estimate of Theorem~\ref{thm:euclidean-ray}; its rigidity statement therefore implies that $\Omega$ is a Euclidean ball. Conversely, balls give equality.

Assume now $1<p<\frac{n+1}{2}$ and equality holds in the
right-hand comparison. Then equality holds in every step of the
upper-bound chain. In particular, equality holds in H\"older's
inequality for almost every regular value $t$, so $|\nabla u|$ is
constant on $\Sigma_t$ for almost every $t\in(0,1)$. Since the
critical capacitary potential has no critical points in the exterior \cite{Lewis},
its level hypersurfaces form a smooth foliation; by continuity, the
same constancy holds for every $t\in(0,1)$. Hence there exists a
smooth function
\[
G:(0,1)\longrightarrow(0,\infty)
\]
such that
\[
|\nabla u|=G(u)
\qquad\text{in }\mathbb R^n\setminus\overline\Omega.
\]

Recall that
$
\nu=-\frac{\nabla u}{|\nabla u|},
$
so that $\nabla u=-G(u)\nu$. Since $u$ is
$\frac{n+1}{2}$-harmonic, on $\Sigma_t$ we have
\[
0
=\operatorname{div}
\left(|\nabla u|^{\frac{n-3}{2}}\nabla u\right) =\operatorname{div}
\left(-G(u)^{\frac{n-1}{2}}\nu\right) =G(t)^{\frac{n-1}{2}}
\left(\frac{n-1}{2}G'(t)-H_t\right).
\]
Therefore
\[
H_t=\frac{n-1}{2}G'(t)
\qquad\text{on }\Sigma_t.
\]
Thus $H_t$ is constant on each level hypersurface $\Sigma_t$.
By Alexandrov's theorem \cite{Alexandrov}, every $\Sigma_t$ is a Euclidean sphere.
Letting $t\uparrow1$ shows that $\Omega$ itself is a Euclidean ball.
The converse follows immediately from the ball formula
\eqref{eq:intro-ball-capacity}.

It remains to prove rigidity when
\[
 p=\frac{n+1}{2}.
\]
Assume that equality holds in the right-hand comparison, namely
\begin{equation}\label{eq:endpoint-capacity-equality}
 \Cap_{\frac{n+1}{2}}(\Omega)
 =
 |\mathbb S^{n-1}|K_-^{-1/2}.
\end{equation}
Let $u$ be the exterior $\frac{n+1}{2}$-capacitary potential of
$\Omega$, and set
\[
 m(t):=\min_{\Sigma_t}K_t^{1/(n-1)},
 \qquad 0<t\le1.
\]
For every $\varepsilon\in(0,1)$, apply the height-concavity theorem of
Ma--Zhang \cite{MaZhang} to
\[
 v_\varepsilon=\frac{u-\varepsilon}{1-\varepsilon}
\]
on the convex ring bounded by $\partial\Omega$ and
$\Sigma_\varepsilon$. Since
\[
 n+1-2p=0
\]
for $p=(n+1)/2$, the mixed curvature quantity in their theorem reduces
to $K^{1/(n-1)}$. Hence $m$ is concave on every interval
$[\varepsilon,1]$, and therefore on $(0,1]$.

The Gauss-map identity on $\Sigma_t$ gives
\[
 |\mathbb S^{n-1}|
 =
 \int_{\Sigma_t}K_t\,dA
 \ge
 m(t)^{n-1}|\Sigma_t|,
\]
and thus
\begin{equation}\label{eq:endpoint-m-area}
 m(t)
 \le
 \left(
 \frac{|\mathbb S^{n-1}|}{|\Sigma_t|}
 \right)^{1/(n-1)}.
\end{equation}
Specializing the far-field area asymptotic
\eqref{eq:direct-asymptotic} to $p=(n+1)/2$, and then using
\eqref{eq:endpoint-capacity-equality}, yields
\begin{equation}\label{eq:endpoint-area-asymptotic}
 \lim_{t\downarrow0}
 \frac1t
 \left(
 \frac{|\mathbb S^{n-1}|}{|\Sigma_t|}
 \right)^{1/(n-1)}
 =
 \left(
 \frac{|\mathbb S^{n-1}|}
 {\Cap_{\frac{n+1}{2}}(\Omega)}
 \right)^{2/(n-1)}
 =
 K_-^{1/(n-1)}.
\end{equation}
It follows from \eqref{eq:endpoint-m-area} that $m(t)\to0$ and
\[
 \limsup_{t\downarrow0}\frac{m(t)}t
 \le K_-^{1/(n-1)}.
\]
Define $m(0)=0$. Since $m$ is concave on every $[\varepsilon,1]$ and
has a finite limit at $0$, it extends to a concave function on
$[0,1]$. Moreover,
\[
 m(1)=K_-^{1/(n-1)}.
\]
Concavity therefore gives
\[
 m(t)\ge tK_-^{1/(n-1)}.
\]
Together with the preceding $\limsup$ estimate this implies
\[
 \lim_{t\downarrow0}\frac{m(t)}t=K_-^{1/(n-1)}.
\]
For a concave function vanishing at the origin, the quotient
$m(t)/t$ is nonincreasing. Consequently,
\begin{equation}\label{eq:endpoint-m-affine}
 m(t)=tK_-^{1/(n-1)},
 \qquad 0\le t\le1.
\end{equation}

We now return to the differential inequality underlying the
Ma--Zhang concavity theorem. Parametrize the strictly convex level
hypersurfaces by their Gauss maps and write
\[
 W(\theta,t):=K_t(\theta)^{1/(n-1)},
 \qquad
 (\theta,t)\in\mathbb S^{n-1}\times(0,1).
\]
In the support-function formulation of Ma--Zhang, the $p$-Laplace
equation becomes the nonlinear equation \cite[(2.7)]{MaZhang}.
The operator $\mathcal L$ introduced in \cite[(2.8)]{MaZhang} is
precisely the linearized operator of that equation; in particular its
principal part is elliptic. Their proof of \cite[(3.1)]{MaZhang},
with
\[
 \alpha=n+1-2p=0,
 \qquad
 \beta=-\frac1{n-1},
\]
gives
\begin{equation}\label{eq:endpoint-MZ-supersolution}
 \mathcal LW\le0
 \quad\bmod\ \nabla_\theta(-\log K_t).
\end{equation}
Here ``modulo'' has the same meaning as in Ma--Zhang: the omitted
terms are linear in the tangential derivatives of $-\log K_t$, with
locally bounded coefficients. Equivalently, after moving those terms
to the left-hand side and using
\[
 \nabla_\theta(-\log K_t)
 =
 -(n-1)\frac{\nabla_\theta W}{W},
\]
one obtains, locally in
$\mathbb S^{n-1}\times(0,1)$, an operator
\begin{equation}\label{eq:endpoint-drift-operator}
 \widetilde{\mathcal L}
 =
 \mathcal L+B^i(\theta,t)\partial_{\theta_i}
\end{equation}
such that
\begin{equation}\label{eq:endpoint-drift-supersolution}
 \widetilde{\mathcal L}W\le0.
\end{equation}

For completeness, fix $0<a<b<1$ and put
\[
 Q_{a,b}:=\mathbb S^{n-1}\times(a,b).
\]
The cylinder $Q_{a,b}$ is open, not compact. What is compact is its
closure
\[
 \overline Q_{a,b}
 =
 \mathbb S^{n-1}\times[a,b].
\]
Because the capacitary potential has no critical points and its level
hypersurfaces are smooth and strictly convex, the support function and
all coefficients occurring in $\mathcal L$ are smooth on a
neighborhood of $\overline Q_{a,b}$. Hence the coefficients of
$\widetilde{\mathcal L}$ are bounded on $\overline Q_{a,b}$, while
the positive-definite principal coefficient matrix has a positive
minimum eigenvalue there. Thus $\widetilde{\mathcal L}$ is uniformly
elliptic on the connected open cylinder $Q_{a,b}$, which is exactly
the setting in which the strong minimum principle is applied.

By \cite[(2.8)]{MaZhang}, $\mathcal L$ contains only second-order
derivatives, while the additional drift in
\eqref{eq:endpoint-drift-operator} is purely tangential. Therefore
\[
 \widetilde{\mathcal L}t=0.
\]
Set
\[
 Z(\theta,t)
 :=
 W(\theta,t)-tK_-^{1/(n-1)}.
\]
By \eqref{eq:endpoint-m-affine}, $Z\ge0$, and for each fixed
$t\in(0,1)$ the function $Z(\cdot,t)$ vanishes at a point where
$W(\cdot,t)$ attains its minimum. Moreover,
\[
 \widetilde{\mathcal L}Z\le0.
\]
Choose $0<a<t_0<b<1$ and $\theta_0\in\mathbb S^{n-1}$ so that
$Z(\theta_0,t_0)=0$. Then $Z$ is a nonnegative supersolution attaining
its minimum at an interior point of the connected open set
$Q_{a,b}$. The strong minimum principle therefore gives
$Z\equiv0$ on $Q_{a,b}$. Since $a,b$ are arbitrary,
\begin{equation}\label{eq:endpoint-K-pointwise}
 K_t(\theta)^{1/(n-1)}
 =
 tK_-^{1/(n-1)}
 \qquad
 \text{on }\mathbb S^{n-1}\times(0,1).
\end{equation}
Letting $t\uparrow1$ yields
\[
 K\equiv K_-
 \qquad\text{on }\partial\Omega.
\]
By the uniqueness theorem for the Minkowski problem, a closed smooth
strictly convex hypersurface with constant Gauss--Kronecker curvature
is a round sphere. Hence $\Omega$ is a Euclidean ball.
\end{proof}

We next use rotational ellipsoids to exhibit both the failure of the corresponding ball inclusions and the supercritical obstruction.

\begin{example}\label{ex:ellipsoid-obstructions}
Assume $n\ge4$ and consider
\[
 E_t=\left\{x_1^2+\cdots+x_{n-1}^2+\frac{x_n^2}{t^2}\le1\right\},
 \qquad t>0,
\]
with parametrization
\[
 X(\varphi,\omega)=(\sin\varphi\,\omega,t\cos\varphi),
 \qquad \omega\in\mathbb S^{n-2}.
\]
Its Gauss--Kronecker curvature is
\begin{equation}\label{eq:ellipsoid-gauss-curvature}
 K(\varphi)
 =\frac{t^{n-1}}
 {\bigl(\cos^2\varphi+t^2\sin^2\varphi\bigr)^{\frac{n+1}{2}}}.
\end{equation}
The following three facts show both that the comparisons in \eqref{eq:intro-gauss-comparison} are not induced by ball inclusions and that the range of the right-hand comparison cannot in general be extended beyond $\frac{n+1}{2}$.
\begin{enumerate}
\item If $t>1$, then $K_-=t^{-2}$ and no ball of radius $K_-^{\frac{1}{1-n}}$ contains $E_t$.
\item If $0<t<1$, then $K_+=t^{-2}$ and no ball of radius $K_+^{\frac{1}{1-n}}$ is contained in $E_t$.
\item If $\frac{n+1}{2}<p<n$, then
\begin{equation}\label{eq:ellipsoid-supercritical-failure}
 \lim_{t\to\infty}
 \frac{\Cap_p(E_t)}
 {\Cap_p\!\left(B(K_-^{\frac{1}{1-n}})\right)}
 =+\infty.
\end{equation}
In particular, for all sufficiently large $t$, the right-hand inequality in \eqref{eq:intro-gauss-comparison} fails.
\end{enumerate}

\begin{proof}
If $t>1$, \eqref{eq:ellipsoid-gauss-curvature} gives
\[
 K_-=t^{-2},\qquad K_+=t^{n-1}.
\]
The diameter of $E_t$ is $2t$, so every ball containing $E_t$ has radius at least $t$. Since $n\ge4$,
\[
 K_-^{\frac{1}{1-n}}=t^{2/(n-1)}<t,
\]
which proves (1).

If $0<t<1$, \eqref{eq:ellipsoid-gauss-curvature} instead gives
\[
 K_-=t^{n-1},\qquad K_+=t^{-2}.
\]
The inradius of $E_t$ is $t$, while
\[
 K_+^{\frac{1}{1-n}}=t^{2/(n-1)}>t
\]
for $n\ge4$. This proves (2).

For (3), first suppose
\[
 \frac{n+1}{2}<p<n-1.
\]
For $t\ge1$, the ellipsoid contains the cylinder
\[
 C_t:=B^{n-1}\!\left(\frac1{\sqrt2}\right)
 \times\left[-\frac{t}{\sqrt2},\frac{t}{\sqrt2}\right],
\]
where $B^{n-1}(r)$ denotes the Euclidean ball of radius $r$ in $\mathbb R^{n-1}$. Let $\varphi\in C_c^\infty(\mathbb R^n)$ be admissible for $\Cap_p(C_t)$ and write $x=(x',x_n)$. For every $x_n\in[-t/\sqrt2,t/\sqrt2]$, the function $x'\mapsto\varphi(x',x_n)$ is admissible for the $p$-capacity of $B^{n-1}(1/\sqrt2)$ in $\mathbb R^{n-1}$. Since $p<n-1$, this latter capacity is positive. Hence, by Fubini,
\[
\begin{aligned}
 \int_{\mathbb R^n}|\nabla\varphi|^p\,dx
 &\ge
 \int_{-t/\sqrt2}^{t/\sqrt2}
 \int_{\mathbb R^{n-1}}|\nabla_{x'}\varphi|^p\,dx'\,dx_n\\
 &\ge c_{n,p}\,t.
\end{aligned}
\]
Taking the infimum and using capacity monotonicity gives
\[
 \Cap_p(E_t)\ge c_{n,p}\,t.
\]
Since $K_-=t^{-2}$,
\[
 \Cap_p\!\left(B(K_-^{\frac{1}{1-n}})\right)
 =|\mathbb S^{n-1}|
 \left(\frac{n-p}{p-1}\right)^{p-1}
 t^{\frac{2(n-p)}{n-1}},
\]
and therefore
\[
 \frac{\Cap_p(E_t)}
 {\Cap_p\!\left(B(K_-^{\frac{1}{1-n}})\right)}
 \ge c_{n,p}\,
 t^{\frac{2p-n-1}{n-1}}\longrightarrow+\infty.
\]

It remains to consider $n-1\le p<n$. Fix any
$
 p_0\in\left(\frac{n+1}{2},n-1\right).
$
The estimate just proved, applied with $p_0$, gives
\[
 \Cap_{p_0}(E_t)\ge c_{n,p_0}\,t.
\]
By the monotonicity in Lemma~\ref{lem:p-capacity-exponent-monotonicity} below,
\[
 \Cap_p(E_t)\ge c_{n,p,p_0}\,
 t^{\frac{n-p}{n-p_0}}.
\]
Consequently,
\[
 \frac{\Cap_p(E_t)}
 {\Cap_p\!\left(B(K_-^{\frac{1}{1-n}})\right)}
 \ge c_{n,p,p_0}\,
 t^{(n-p)\left(\frac1{n-p_0}-\frac2{n-1}\right)}
 \longrightarrow+\infty,
\]
because $p_0>\frac{n+1}{2}$. This proves (3).
\end{proof}
\end{example}

The following normalized monotonicity is related to Xu's quasi-monotonicity for variational $p$-capacity \cite{Xu1996} and to Xiao's affine counterparts \cite{XiaoAffineCapacity,XiaoAffineRedux}.

\begin{lemma}\label{lem:p-capacity-exponent-monotonicity}
Let $E\subset\mathbb R^n$ be compact. Then
\[
 p\longmapsto
 \left[
 \left(\frac{(n-1)p}{n(n-p)|B(1)|^{1/n}}\right)^p
 \Cap_p(E)
 \right]^{\frac1{n-p}}
\]
is nondecreasing on $(1,n)$.
\end{lemma}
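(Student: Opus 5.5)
Fix $1<p<q<n$ and let $v$ be the $q$-capacitary potential of $E$, or a smooth admissible $\varphi$ with $0\le\varphi\le1$ and $\varphi=1$ near $E$. The idea is to use a power of the $q$-test function as a test function for $\Cap_p$, and to control what remains by H\"older's inequality and the Sobolev inequality. This is the standard route to Xu-type quasi-monotonicity. The normalizing constant $\frac{(n-1)p}{n(n-p)|B(1)|^{1/n}}$ involves the sharp $L^1$-Sobolev (isoperimetric) constant $n|B(1)|^{1/n}$ and the factor $\frac{p(n-1)}{n-p}$. This is exactly the constant obtained by applying the isoperimetric inequality to $|w|^{\gamma}$ with $\gamma=\frac{p(n-1)}{n-p}$. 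So I plan to work with the non-sharp but explicit Sobolev inequality
\[
 \|w\|_{L^{p^*}}\le \frac{(n-1)p}{n(n-p)|B(1)|^{1/n}}\,\|\nabla w\|_{L^p},
 \qquad p^*=\frac{np}{n-p},
\]
which is proved exactly this way. Call the constant $S_p$, so the statement says that $p\mapsto(S_p^p\Cap_p(E))^{1/(n-p)}$ is nondecreasing.

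The first step is a reformulation. For admissible $\varphi$ with $\varphi\ge1$ on $E$, set $\theta=\frac{q(n-p)}{p(n-q)}>1$ and test $\Cap_p$ with $w=\varphi^{\theta}$, truncated to $\min(\varphi,1)$. Then $|\nabla w|^p=\theta^p\varphi^{(\theta-1)p}|\nabla\varphi|^p$. By H\"older with exponents $q/p$ and $q/(q-p)$,
\[
 \int|\nabla w|^p\le\theta^p\Big(\int|\nabla\varphi|^q\Big)^{p/q}\Big(\int\varphi^{(\theta-1)pq/(q-p)}\Big)^{(q-p)/q}.
\]
A direct check gives $(\theta-1)\frac{pq}{q-p}=\frac{nq}{n-q}=q^*$. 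So the last factor is $\|\varphi\|_{q^*}^{q^*(q-p)/q}$. Applying the Sobolev inequality at exponent $q$ bounds it by $(S_q\|\nabla\varphi\|_q)^{q^*(q-p)/q}$. Collecting exponents, $\int|\nabla\varphi|^q$ appears with total power $\frac pq+\frac{q^*(q-p)}{q^2}=\frac{p(n-q)+n(q-p)}{q(n-q)}\cdot$(after simplification) $=\frac{n-p}{n-q}$. Taking the infimum over $\varphi$ gives
\[
 \Cap_p(E)\le \theta^p S_q^{\,q\frac{q-p}{q}\cdot\frac{n}{n-q}}\;\Cap_q(E)^{\frac{n-p}{n-q}}.
\]

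The remaining step is pure constant bookkeeping. I need to verify that
\[
 S_p^{p/(n-p)}\,\theta^{p/(n-p)}\,S_q^{\frac{n(q-p)}{(n-q)(n-p)}}\le S_q^{q/(n-q)}.
\]
Raised to the power $(n-p)$, this reads $(S_p\theta)^p\le S_q^{p}$, i.e. $S_p\theta\le S_q$. Now
\[
 S_p\theta=\frac{(n-1)p}{n(n-p)|B(1)|^{1/n}}\cdot\frac{q(n-p)}{p(n-q)}=\frac{(n-1)q}{n(n-q)|B(1)|^{1/n}}=S_q,
\]
so equality holds exactly. This equality is what singles out the normalization in the lemma and is the reason it is stated with that constant. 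With it, $(S_p^p\Cap_p)^{1/(n-p)}\le(S_q^q\Cap_q)^{1/(n-q)}$, which is the claimed monotonicity.

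I expect the main obstacle to be justifying the Sobolev inequality with precisely the constant $S_q$. I will prove it from the isoperimetric $L^1$ Sobolev inequality $\|f\|_{n/(n-1)}\le (n|B(1)|^{1/n})^{-1}\|\nabla f\|_1$ applied to $f=|w|^{\gamma}$, followed by H\"older; this is classical. Minor technical points are the admissibility of $w=\varphi^\theta$ (it is $C^1$ and compactly supported, and approximation handles the infimum), and the fact that the exponents work out exactly. I will double-check the exponent identity $\frac pq+\frac{n(q-p)}{q(n-q)}=\frac{n-p}{n-q}$, which holds since $p(n-q)+n(q-p)=q(n-p)$.
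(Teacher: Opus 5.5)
Your proposal is correct and follows the paper's proof essentially step for step. You derive the exponent-$q$ Sobolev inequality from the sharp $L^1$ inequality applied to a power, test $\Cap_p(E)$ with $u^{q(n-p)/(p(n-q))}$, split with H\"older using exponents $q/p$ and $q/(q-p)$, and absorb the $L^{nq/(n-q)}$ factor. Your explicit check that $S_p\theta=S_q$ is exactly the ``rearranging'' step that the paper leaves implicit.
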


\begin{proof}
Fix $1<p<q<n$. We use the standard equivalent Sobolev formulation of capacity. The sharp $W^{1,1}(\mathbb R^n)$ Sobolev inequality is
\[
 \|f\|_{\frac n{n-1}}
 \le \frac1{n|B(1)|^{1/n}}\|\nabla f\|_1.
\]
Applying it to $f=u^{\frac{q(n-1)}{n-q}}$ and then using H\"older's inequality gives
\[
 \|u\|_{\frac{nq}{n-q}}
 \le
 \frac{(n-1)q}{n(n-q)|B(1)|^{1/n}}
 \|\nabla u\|_q.
\]
Indeed, after differentiating the power of $u$, the second H\"older factor has exponent $nq/(n-q)$, and cancellation leaves exactly one power of the norm on the left.

Now let $u$ be any nonnegative admissible function for $\Cap_q(E)$. Then
$
 u^{\frac{q(n-p)}{p(n-q)}}
$
is admissible for $\Cap_p(E)$. Hence H\"older's inequality and the preceding Sobolev estimate yield
\begin{align*}
 \Cap_p(E)
 &\le
 \left(\frac{q(n-p)}{p(n-q)}\right)^p
 \int_{\mathbb R^n}
 u^{\frac{n(q-p)}{n-q}}|\nabla u|^p\,dx\\
 &\le
 \left(\frac{q(n-p)}{p(n-q)}\right)^p
 \left(\int_{\mathbb R^n}|\nabla u|^q\,dx\right)^{p/q}
 \left(\int_{\mathbb R^n}u^{\frac{nq}{n-q}}\,dx\right)^{(q-p)/q}\\
 &\le
 \left(\frac{q(n-p)}{p(n-q)}\right)^p
 \left(\frac{(n-1)q}{n(n-q)|B(1)|^{1/n}}\right)^{\frac{n(q-p)}{n-q}}
 \left(\int_{\mathbb R^n}|\nabla u|^q\,dx\right)^{\frac{n-p}{n-q}}.
\end{align*}
Taking the infimum over all such $u$ and rearranging the resulting inequality proves the claimed monotonicity.
\end{proof}

\begin{remark}\label{rem:cross-exponent-curvature}
For $1<r<n$, set
\[
 a_{n,r}:=
 \left(\frac{(n-1)r}{n(n-r)|B(1)|^{1/n}}\right)^r.
\]
Then Lemma~\ref{lem:p-capacity-exponent-monotonicity} says that, for
$1<s<q<n$,
\[
 \left[a_{n,q}\Cap_q(\Omega)\right]^{\frac1{n-q}}
 \ge
 \left[a_{n,s}\Cap_s(\Omega)\right]^{\frac1{n-s}}.
\]
Combining this with Theorem~\ref{thm:euclidean-ray}, applied with exponent $s$, gives
\begin{equation}\label{eq:optimized-cross-exponent-curvature}
 \Cap_q(\Omega)
 \ge
 \sup_{1<s\le q}
 a_{n,q}^{-1}
 \left[
 a_{n,s}
 \left(\frac{n-s}{s-1}\right)^{s-1}
 \int_{\partial\Omega}K^{\frac{s-1}{n-1}}\,dA
 \right]^{\frac{n-q}{n-s}}.
\end{equation}
Thus the $q$-capacity can be controlled by any lower Gauss-curvature moment with $s<q$, while the choice $s=q$ recovers the direct estimate.
\end{remark}

\begin{remark}\label{rem:eigenvalue-curvature}
Let
\[
 \lambda_{1,p}(\Omega)
 :=\inf_{0\ne v\in W_0^{1,p}(\Omega)}
 \frac{\displaystyle\int_\Omega |\nabla v|^p\,dx}
 {\displaystyle\int_\Omega |v|^p\,dx}
\]
be the first Dirichlet eigenvalue of the $p$-Laplacian. If $K_->0$, then for every $p>1$,
\begin{equation}\label{eq:eigenvalue-kmin}
 \lambda_{1,p}(\Omega)
 \ge \lambda_{1,p}(B(1))\,
 K_-^{\frac{p}{n-1}}.
\end{equation}
Indeed, the Gauss-map identity and $K\ge K_-$ give
$
 |\partial\Omega|\le \frac{|\mathbb S^{n-1}|}{K_-},
$
and the Euclidean isoperimetric inequality therefore yields
$
 |\Omega|\le |B(1)|K_-^{-\frac{n}{n-1}}.
$
The $p$-Faber--Krahn inequality \cite{Bhattacharya} then gives \eqref{eq:eigenvalue-kmin}. Equality holds if and only if $\Omega$ is a Euclidean ball.

In contrast, when $n\ge4$, $K_+$ cannot give a universal upper bound for the first Dirichlet $p$-eigenvalue. More precisely, there is no constant $C_{n,p}$ such that
\[
 \lambda_{1,p}(\Omega)
 \le C_{n,p}K_+^{\frac{p}{n-1}}
\]
for every smooth bounded strictly convex domain. Indeed, take the ellipsoids $E_t$ from Example~\ref{ex:ellipsoid-obstructions} with $0<t<1$. Then
\[
 K_+(E_t)=t^{-2},
 \qquad
 E_t\subset\{x\in\mathbb R^n:\ |x_n|<t\}.
\]
Applying the Dirichlet Poincar\'e inequality in the $x_n$-direction gives
$
 \lambda_{1,p}(E_t)
 \ge \lambda_{1,p}((-1,1))\,t^{-p}.
$
Consequently,
\[
 \frac{\lambda_{1,p}(E_t)}{K_+(E_t)^{\frac{p}{n-1}}}
 \ge
 \lambda_{1,p}((-1,1))\,
 t^{-\frac{p(n-3)}{n-1}}
 \longrightarrow+\infty
 \qquad\text{as }t\downarrow0.
\]
Thus $K_-$ provides the sharp scale-invariant lower bound \eqref{eq:eigenvalue-kmin}, whereas, in dimensions $n\ge4$, $K_+$ alone cannot control $\lambda_{1,p}$ from above.
\end{remark}

\begin{proof}[Proof of Theorem~\ref{thm:intro-gauss-pinching}]
The case $\rho=1$ follows from the limiting form of the argument below, so assume $0<\rho<1$. By Theorem~\ref{thm:euclidean-ray},
\[
 \Cap_p(\Omega)
 \ge
 \left(\frac{n-p}{p-1}\right)^{p-1}
 \int_{\partial\Omega}K^{\frac{p-1}{n-1}}\,dA.
\]
Since $K_-\ge\rho K_+$ and $x^{\frac{p-1}{n-1}}$ is concave on $(0,\infty)$, its graph lies above the chord joining $\rho K_+$ and $K_+$. Therefore, pointwise on $\partial\Omega$,
\[
\begin{aligned}
 K^{\frac{p-1}{n-1}}
 \ge{}&
 \frac{(K_+-K)(\rho K_+)^{\frac{p-1}{n-1}}
 +(K-\rho K_+)K_+^{\frac{p-1}{n-1}}}
 {(1-\rho)K_+}\\
 ={}&
 \frac{K_+^{\frac{p-1}{n-1}-1}}{1-\rho}
 \left[
 \left(\rho^{\frac{p-1}{n-1}}-\rho\right)K_+
 +\left(1-\rho^{\frac{p-1}{n-1}}\right)K
 \right].
\end{aligned}
\]
Integrating and using the Gauss-map area formula
\[
 \int_{\partial\Omega}K\,dA=|\mathbb S^{n-1}|
\]
gives
$$
 \int_{\partial\Omega}K^{\frac{p-1}{n-1}}\,dA
 \ge
 \frac{K_+^{\frac{p-1}{n-1}-1}}{1-\rho}
 \bigg[
 \left(\rho^{\frac{p-1}{n-1}}-\rho\right)K_+|\partial\Omega|
 +\left(1-\rho^{\frac{p-1}{n-1}}\right)|\mathbb S^{n-1}|
 \bigg].$$
After division by $|\partial\Omega|^{\frac{n-p}{n-1}}$, this becomes
\[
\begin{aligned}
 \frac{\displaystyle\int_{\partial\Omega}K^{\frac{p-1}{n-1}}\,dA}
 {|\partial\Omega|^{\frac{n-p}{n-1}}}
 &\ge{}\frac{1}{1-\rho}
 \bigg[
 \left(\rho^{\frac{p-1}{n-1}}-\rho\right)
 \left(K_+|\partial\Omega|\right)^{\frac{p-1}{n-1}}\\
 &\qquad+
 \left(1-\rho^{\frac{p-1}{n-1}}\right)|\mathbb S^{n-1}|
 \left(K_+|\partial\Omega|\right)^{-\frac{n-p}{n-1}}
 \bigg]
 \\ &\ge|\mathbb S^{n-1}|^{\frac{p-1}{n-1}}
 \frac{\left(1-\rho^{\frac{p-1}{n-1}}\right)^{\frac{p-1}{n-1}}}
 {(1-\rho)\left(\frac{p-1}{n-1}\right)^{\frac{p-1}{n-1}}}\\
 &\qquad\times
 \frac{\left(\rho^{\frac{p-1}{n-1}}-\rho\right)^{\frac{n-p}{n-1}}}
 {\left(\frac{n-p}{n-1}\right)^{\frac{n-p}{n-1}}}.
\end{aligned}
\]
Here in the second inequality we use the weighted arithmetic--geometric mean inequality with weights $(n-p)/(n-1)$ and $(p-1)/(n-1).$
Combining this with Theorem~\ref{thm:euclidean-ray}, using
\[
 \rho^{\frac{p-1}{n-1}}-\rho
 =\rho^{\frac{p-1}{n-1}}
 \left(1-\rho^{\frac{n-p}{n-1}}\right),
\]
raising the resulting inequality to the power $n-1$, and collecting all powers of
$p-1$ and $n-p$ on the right-hand side gives
\[
\begin{aligned}
 \frac{\Cap_p(\Omega)^{n-1}}{|\partial\Omega|^{n-p}}
 \ge{}& |\mathbb S^{n-1}|^{p-1}
 \frac{(n-1)^{n-1}(n-p)^{n(p-2)+1}}
 {(p-1)^{n(p-1)}}\\
 &\quad\times
 \frac{\rho^{\frac{(p-1)(n-p)}{n-1}}
 \left(1-\rho^{\frac{p-1}{n-1}}\right)^{p-1}}
 {(1-\rho)^{n-1}}\\
 &\quad\times
 \left(1-\rho^{\frac{n-p}{n-1}}\right)^{n-p},
\end{aligned}
\]
which is \eqref{eq:intro-gauss-pinching}.

For $n=3$ and $p=2$, \eqref{eq:intro-gauss-pinching} reduces to
\[
 \frac{\Cap_2(\Omega)^2}{4\pi|\partial\Omega|}
 \ge\frac{4\sqrt\rho}{(1+\sqrt\rho)^2},
\]
which is equivalent to \eqref{eq:intro-newtonian-pinching}. The right-hand side of \eqref{eq:intro-newtonian-pinching} is at least $\sqrt{32/\pi}$ precisely when
\[
 \rho\ge
 \left(\frac{\pi^2-4-\pi\sqrt{\pi^2-8}}{4}\right)^2.
\]
Under this condition,
\[
 \frac{\Cap_2(\Omega)^2}{|\partial\Omega|}\ge\frac{32}{\pi},
\]
which is the P\'olya--Szeg\"{o} inequality in the capacity normalization used here.
\end{proof}

In dimension three and for $p=2$, the Euclidean ray integral has an explicit logarithmic-mean form.

\begin{remark}\label{rem:logarithmic-curvature}
When $n=3$ and $p=2$, the inner one-dimensional integral can be evaluated explicitly. At a point where $\kappa_1,\kappa_2>0$ and $\kappa_1\ne\kappa_2$,
\[
 \left[
 \int_0^\infty
 \frac{ds}{(1+s\kappa_1)(1+s\kappa_2)}
 \right]^{-1}
 =\frac{\kappa_1-\kappa_2}{\ln\kappa_1-\ln\kappa_2}.
\]
At a positive umbilical point $\kappa_1=\kappa_2$ the right-hand side equals $\kappa_1$. The logarithmic mean extends continuously to $[0,\infty)^2$ by assigning the value zero when at least one argument is zero; this agrees with the ray formula because the corresponding integral is then infinite. Hence
\begin{equation}\label{eq:three-dimensional-curvature}
 \Cap_2(\Omega)
 \ge
 \int_{\partial\Omega}
 \frac{\kappa_1-\kappa_2}{\ln\kappa_1-\ln\kappa_2}\,dA
 \ge
 \int_{\partial\Omega}\sqrt{K}\,dA,
\end{equation}
where the last inequality follows from
\[
 \frac{a-b}{\ln a-\ln b}\ge\sqrt{ab},
 \qquad a,b>0.
\]
\end{remark}

\begin{remark}\label{rem:ch-pinching-extension}
Theorem~\ref{thm:intro-gauss-pinching} extends directly to three-dimensional Cartan--Hadamard manifolds. More precisely, let $(N^3,g)$ be Cartan--Hadamard and let $\Omega\Subset N$ be smooth and geodesically convex. If
\[
0<K_-\le K\le K_+,
\qquad
\frac{K_-}{K_+}=\rho\in(0,1],
\]
where $K=\kappa_1\kappa_2$ is the Gauss--Kronecker curvature of $\partial\Omega$, then, for every $1<p<3$,
\[
\frac{\Cap_p(\Omega)^2}{|\partial\Omega|_g^{3-p}}
\ge \Phi_{p,3}(\rho).
\]
More generally, for $n\ge4$ the conclusion of Theorem~\ref{thm:intro-gauss-pinching} remains valid on a Cartan--Hadamard manifold provided that
\[
\int_{\partial\Omega}K\,dA_g\ge |\mathbb S^{n-1}|.
\]
Indeed, the $k=0$ normal-Jacobian comparison and the normal-ray argument give
\[
\Cap_p(\Omega)\ge
\left(\frac{n-p}{p-1}\right)^{p-1}
\int_{\partial\Omega}K^{\frac{p-1}{n-1}}\,dA_g.
\]
When $n=3$, the Gauss equation and $\Sec_N\le0$ imply
\[
K_{\partial\Omega}^{\mathrm{int}}
=K+\Sec_N(T\partial\Omega)\le K.
\]
Since $\partial\Omega\simeq\mathbb S^2$, Gauss--Bonnet yields
\[
\int_{\partial\Omega}K\,dA_g
\ge
\int_{\partial\Omega}K_{\partial\Omega}^{\mathrm{int}}\,dA_g
=4\pi.
\]
Thus the proof of Theorem~\ref{thm:intro-gauss-pinching} applies verbatim, with the Euclidean Gauss-map identity replaced by the preceding inequality. In dimensions $n\ge4$, the stated total Gauss--Kronecker curvature assumption provides exactly the same substitute.
\end{remark}

\section{Cartan--Hadamard capacity--area lower bounds}\label{sec:ch}

Let $(N^n,g)$ be a Cartan--Hadamard manifold and let $\Omega\Subset N$ be smooth and geodesically convex. We recall that, under the uniform upper curvature bound $\Sec_N\le-k^2<0$, $N$ is $p$-hyperbolic for every $p>1$; see \cite[Section~6.2]{DAmbrosioDipierro}. See also Holopainen \cite{Holopainen} for the asymptotic Dirichlet problem for the $p$-Laplacian under negatively pinched curvature. Hence the exterior $p$-capacitary potential exists by the standard $p$-potential theory discussed above. For $p=2$, lower capacity comparisons based on lower bounds for the initial principal curvatures were obtained by Hurtado--Palmer--Ritor\'e \cite{HurtadoPalmerRitore}. For $y\in\partial\Omega$, let $\nu$ be the outward unit normal and consider
\begin{equation}\label{eq:normal-map}
 F(y,s)=\exp_y(s\nu(y)),
 \qquad s\ge0.
\end{equation}
The map $F$ gives global normal coordinates on $N\setminus\Omega$; we denote by $J(y,s)$ the Jacobian of $F$ along the outward normal ray from $y$.

\subsection{The full boundary-curvature estimate}

\begin{proof}[Proof of Theorem~\ref{thm:intro-ch}]
Let $\varphi\in C_c^\infty(N)$ be admissible for $\Cap_p(\Omega)$. In the global normal coordinates \eqref{eq:normal-map},
\begin{equation}\label{eq:ch-energy}
 \int_{N\setminus\Omega}|\nabla\varphi|^p\,dV_g
 \ge
 \int_{\partial\Omega}\int_0^\infty
 |\partial_s\varphi(y,s)|^pJ(y,s)\,ds\,dA(y).
\end{equation}
For each fixed $y$, Lemma~\ref{lem:weighted-ray} gives
\begin{equation}\label{eq:ch-ray}
 \int_0^\infty|\partial_s\varphi|^pJ\,ds
 \ge
 \left(\int_0^\infty J^{-1/(p-1)}\,ds\right)^{1-p}.
\end{equation}
By the normal Jacobi-field comparison theorem of Heintze and Karcher
\cite[Section~3]{HeintzeKarcher}, the Jacobian of the outward normal
exponential map satisfies
\begin{equation}\label{eq:jacobian-comparison-full}
 J(y,s)
 \ge
 \prod_{i=1}^{n-1}
 \left(\cosh(ks)+\frac{\kappa_i(y)}{k}\sinh(ks)\right).
\end{equation}
Since $1-p<0$, \eqref{eq:jacobian-comparison-full} yields
\[
 \left(\int_0^\infty J^{-1/(p-1)}\,ds\right)^{1-p}
 \ge
 \mathcal C_{n,p}(k;\kappa_1(y),\ldots,\kappa_{n-1}(y)).
\]
Integration over $\partial\Omega$ and the infimum over admissible functions prove \eqref{eq:intro-ch-full}.

For \eqref{eq:intro-ch-gauss}, expand the determinant and use the Newton--Maclaurin inequalities directly:
\begin{align*}
 &\prod_{i=1}^{n-1}\left(\cosh(ks)+k^{-1}\kappa_i\sinh(ks)\right)\\
 &\quad=\sum_{j=0}^{n-1}\cosh(ks)^{n-1-j}
 \left(k^{-1}\sinh(ks)\right)^j\sigma_j(\kappa)\\
 &\quad\ge\sum_{j=0}^{n-1}\binom{n-1}{j}\cosh(ks)^{n-1-j}
 \left(k^{-1}\sinh(ks)\right)^jK^{j/(n-1)}\\
 &\quad=\left(\cosh(ks)+k^{-1}K^{1/(n-1)}\sinh(ks)\right)^{n-1}.
\end{align*}
Here $\sigma_j(\kappa)$ denotes the $j$-th elementary symmetric polynomial of the variables $\kappa_1,\ldots,\kappa_{n-1}$; we set $\sigma_0=1$.
Substitution into the defining one-dimensional integral gives
\[
 \mathcal C_{n,p}(k;\kappa_1,\ldots,\kappa_{n-1})
 \ge C_{n,p}(k,K^{1/(n-1)}).
\]
Finally $K^{1/(n-1)}\ge\min_i\kappa_i$ and $\tau\mapsto C_{n,p}(k,\tau)$ is increasing, which proves \eqref{eq:intro-ch}.
\end{proof}

\begin{proof}[Proof of Corollary~\ref{cor:intro-ch-uniform}]
The last line of Theorem~\ref{thm:intro-ch} and the monotonicity of $\tau\mapsto C_{n,p}(k,\tau)$ give
\[
 C_{n,p}\left(k,\min_i\kappa_i(y)\right)
 \ge C_{n,p}(k,\lambda_0)
\]
at every boundary point. Integration proves the conclusion.
\end{proof}

We record several explicit forms of the Cartan--Hadamard model constants, including the three-dimensional logarithmic mean.

\begin{remark}\label{rem:explicit-ch}
At the convex endpoint $\tau=0$, the beta-integral identity gives
\begin{equation}\label{eq:ch-gamma-constant}
 C_{n,p}(k,0)
 =\left[
 \frac{\sqrt\pi}{2k}
 \frac{\Gamma\left(\frac{n-1}{2(p-1)}\right)}
 {\Gamma\left(\frac{n+p-2}{2(p-1)}\right)}
 \right]^{1-p}.
\end{equation}
At the critical exponent $p=\frac{n+1}{2}$,
\begin{equation}\label{eq:ch-critical-constant}
 C_{n,\frac{n+1}{2}}(k,\tau)
 =(k+\tau)^{\frac{n-1}{2}}.
\end{equation}
Consequently, the stronger Gauss-curvature form \eqref{eq:intro-ch-gauss} gives
\begin{equation}\label{eq:ch-critical-pointwise}
 \Cap_{\frac{n+1}{2}}(\Omega)
 \ge
 \int_{\partial\Omega}
 \left(k+K^{1/(n-1)}\right)^{\frac{n-1}{2}}\,dA_g,
\end{equation}
whereas the uniform lower bound $\kappa_i\ge\lambda_0$ gives
\begin{equation}\label{eq:ch-critical-uniform}
 \Cap_{\frac{n+1}{2}}(\Omega)
 \ge
 (k+\lambda_0)^{\frac{n-1}{2}}|\partial\Omega|_g.
\end{equation}

For $n=3$ and $p=2$, the full constant can be evaluated exactly. If $\kappa_1\ne\kappa_2$, the substitution $t=\tanh(ks)$ gives
\begin{align}
 \mathcal C_{3,2}(k;\kappa_1,\kappa_2)
 &=\left[
 \int_0^\infty
 \frac{ds}{\left(\cosh ks+\frac{\kappa_1}{k}\sinh ks\right)
 \left(\cosh ks+\frac{\kappa_2}{k}\sinh ks\right)}
 \right]^{-1}\notag\\
 &=\frac{\kappa_1-\kappa_2}
 {\ln(k+\kappa_1)-\ln(k+\kappa_2)}.
 \label{eq:hyperbolic-log-mean}
\end{align}
At $\kappa_1=\kappa_2$ this is understood by continuity and equals $k+\kappa_1$. Thus
\begin{equation}\label{eq:ch-three-dimensional-full}
 \Cap_2(\Omega)
 \ge
 \int_{\partial\Omega}
 \frac{\kappa_1-\kappa_2}
 {\ln(k+\kappa_1)-\ln(k+\kappa_2)}\,dA_g
 \ge
 \int_{\partial\Omega}\sqrt{(k+\kappa_1)(k+\kappa_2)}\,dA_g.
\end{equation}
In particular,
\begin{equation}\label{eq:ch-three-dimensional-gauss}
 \Cap_2(\Omega)
 \ge
 \int_{\partial\Omega}(k+\sqrt K)\,dA_g,
\end{equation}
since $\sqrt{(k+\kappa_1)(k+\kappa_2)}\ge k+\sqrt{\kappa_1\kappa_2}$.
\end{remark}

We next describe the equality model in the constant-curvature space form.

\begin{remark}\label{rem:ch-model}
In the space form of constant sectional curvature $-k^2$, the boundary of a geodesic ball of radius $R$ has principal curvature $k\coth(kR)$. Its outward normal Jacobian is
\[
 \left(\frac{\sinh(k(R+s))}{\sinh(kR)}\right)^{n-1}
 =\left(\cosh(ks)+\coth(kR)\sinh(ks)\right)^{n-1},
\]
which is exactly the model with $\tau=k\coth(kR)$. The radial capacitary potential is the extremal function in Lemma~\ref{lem:weighted-ray}, and equality holds. As $R\to\infty$, the capacity--area ratio tends to
\[
 \left(\frac{(n-1)k}{p-1}\right)^{p-1}.
\]
\end{remark}

\begin{proof}[Proof of the rigidity statement in Corollary~\ref{cor:intro-ch-uniform}]
The warped-product model directly gives equality. Conversely, suppose equality holds and apply the lower-bound argument to the minimizing potential $u$. Since $\kappa_i(y)\ge\lambda_0$ and the full constant is strictly increasing in each principal curvature,
\[
 \mathcal C_{n,p}(k;\kappa_1(y),\ldots,\kappa_{n-1}(y))
 \ge
 \mathcal C_{n,p}(k;\lambda_0,\ldots,\lambda_0)
 =C_{n,p}(k,\lambda_0)
\]
pointwise on $\partial\Omega$. The difference between the two sides is continuous and nonnegative, while equality in the integrated capacity estimate makes its integral zero. Hence the difference vanishes pointwise, and strict monotonicity gives
\[
 \kappa_1(y)=\cdots=\kappa_{n-1}(y)=\lambda_0
\]
for every $y\in\partial\Omega$. Equality in
\[
 |\nabla u|^p\ge|\partial_su|^p
\]
forces the tangential gradient of $u$ to vanish, so $u$ depends only on $s$. Equality in the ray inequality and in the Jacobian comparison then gives, first almost everywhere and subsequently everywhere by continuity,
\[
 J(y,s)=
 \left(\cosh(ks)+\frac{\lambda_0}{k}\sinh(ks)\right)^{n-1},
\]
and $u$ is the one-dimensional extremal \eqref{eq:intro-ch-rigidity-potential}.

The equality discussion in the same normal Jacobi-field comparison theorem
\cite[Section~3.4.8]{HeintzeKarcher} then shows that equality in the
Jacobian comparison along every normal ray forces the normal Jacobi fields
to agree with the constant-curvature model. Since
$\kappa_1=\cdots=\kappa_{n-1}=\lambda_0$ on $\partial\Omega$, this gives
\[
 g
 =
 ds^2+
 \left(\cosh(ks)+\frac{\lambda_0}{k}\sinh(ks)\right)^2
 g_{\partial\Omega}
\]
on the exterior, and all radial sectional curvatures are equal to $-k^2$.
This proves \eqref{eq:intro-ch-rigidity-metric}. In the space form, a compact
totally umbilical hypersurface is a geodesic sphere.
\end{proof}

The following gives a simple sufficient curvature condition for the warped equality model.

\begin{remark}\label{rem:ch-model-curvature}
Conversely, let $(\Sigma^{n-1},g_\Sigma)$ be a closed Riemannian manifold and set
\[
 f(s)=\cosh(ks)+\frac{\lambda_0}{k}\sinh(ks).
\]
The warped product $ds^2+f(s)^2g_\Sigma$ satisfies $\Sec\le-k^2$ provided
\begin{equation}\label{eq:ch-boundary-curvature-condition}
 \Sec_{g_\Sigma}\le\lambda_0^2-k^2
\end{equation}
on every tangent two-plane of $\Sigma$ (the condition is vacuous when $n=2$). Indeed, every radial sectional curvature equals $-f''/f=-k^2$, whereas a tangent two-plane $\Pi\subset T\Sigma$ has
\[
 \Sec_g(\Pi)
 =\frac{\Sec_{g_\Sigma}(\Pi)-(f'(s))^2}{f(s)^2}
 \le -k^2,
\]
because $(f')^2-k^2f^2=\lambda_0^2-k^2$. Thus \eqref{eq:ch-boundary-curvature-condition} is a simple intrinsic sufficient condition ensuring that the warped end itself satisfies the sectional-curvature bound required in Theorem~\ref{thm:intro-ch}. No assertion about filling this end by a complete Cartan--Hadamard manifold is needed here.
\end{remark}

\begin{remark}
The proof uses only the ambient sectional-curvature bound and the second fundamental form of the initial boundary. It does not require convexity or horoconvexity of the capacitary level hypersurfaces.
\end{remark}

\section*{Acknowledgements}
The author was supported by the Fundamental Research Funds for the Central
Universities, HUST (No.~2025BRSXB002), and the National Natural Science
Foundation of China (Grant No.~12471054).

\section*{Statements and Declarations}
\noindent\textbf{Competing interests.}
The author declares no competing interests.

\medskip
\noindent\textbf{Data availability.}
Data sharing is not applicable to this article, as no datasets were generated
or analysed during the current study.

\medskip
\noindent\textbf{Use of AI-assisted tools.}
The mathematical results and proof strategies in this manuscript are the
responsibility of the author. ChatGPT 5.6 Sol was used as an assistive tool in
exploring and checking the endpoint rigidity argument in the proof of
Theorem~\ref{thm:intro-gauss-comparison}, as well as the specific material in
Example~\ref{ex:ellipsoid-obstructions} and Remark~\ref{rem:explicit-ch}.
It was also used for  improving organization and assisting with LaTeX typesetting. All AI-assisted
mathematical content, calculations, references, and expository revisions were
independently reviewed and verified by the author, who takes full responsibility
for the correctness, originality, and final content of the manuscript.

\end{document}